\definecolor{citation}{rgb}{0.2,0.58,0.2} 
\definecolor{formula}{rgb}{0.1,0.2,0.6}
\definecolor{url}{rgb}{0.3,0,0.5}
\newcommand{\reqnomode}{\tagsleft@false}
\def\dx{\,{\rm d}x}
\def\dy{\,{\rm d}y}
\def\dist{\,{\rm dist}}
\def\diam{\,{\rm diam}}
\DeclareRobustCommand*{\bfseries}{%
  \not@math@alphabet\bfseries\mathbf
  \fontseries\bfdefault\selectfont
  \boldmath
}
\DeclareMathOperator*{\osc}{osc}
\newlength{\defbaselineskip}
\newcommand{\setlinespacing}[1]
           {\setlength{\baselineskip}{#1 \defbaselineskip}}
\newtheorem{theorem}{Theorem}
\newtheorem{definition}{Definition}
\newtheorem{remark}{Remark}[section]
\newtheorem{lemma}{Lemma}[section]
\newtheorem{proposition}{Proposition}[section]
\newcommand{\N}{\mathbb{N}}
\numberwithin{equation}{section}
\newcommand{\rr}{\varrho}
\newcommand{\snr}[1]{\lvert #1\rvert}
\newcommand{\nr}[1]{\lVert #1 \rVert}
\title[
Fully nonlinear elliptic equations with non-homogeneous degeneracy]{Regularity for solutions of fully nonlinear elliptic equations with non-homogeneous degeneracy
}
\author{Cristiana De Filippis}  \address{Cristiana De Filippis\\Mathematical Institute, University of Oxford\\ Andrew Wiles Building, Radcliffe Observatory Quarter, Woodstock Road, Oxford, OX26GG, Oxford, United Kingdom} \email{\texttt{Cristiana.DeFilippis@maths.ox.ac.uk}}
\begin{document}

\subjclass[2010]{35J60, 35J70\vspace{1mm}} 

\keywords{Fully nonlinear degenerate equations, Double Phase problems, Regularity\vspace{1mm}}

\thanks{{\it Acknowledgements.}\ The author is supported by the Engineering and Physical Sciences Research Council (EPSRC): CDT Grant Ref. EP/L015811/1. 
\vspace{1mm}}

\maketitle

\begin{abstract}
We prove that viscosity solutions to fully nonlinear elliptic equations with degeneracy of double phase type are locally $C^{1,\gamma}$-regular.
\end{abstract}
\vspace{3mm}
{\small \tableofcontents}

\setlinespacing{1.08}

\newcommand{\R}{\mathds{R}^n}

\section{Introduction}\label{intro}
We prove $C^{1,\gamma}$-local regularity for viscosity solutions of problem
\begin{flalign}\label{eq}
\left[\snr{Du}^{p}+a(x)\snr{Du}^{q}\right]F(D^{2}u)=f(x) \ \ \mbox{in} \ \ \Omega, \quad 0 \leq a(\cdot)\in C^0(\Omega)\,,\quad  0< p \leq q, 
\end{flalign}
where $\Omega\subset \mathbb{R}^{n}$, $n\ge 2$ is an open and bounded domain. 
Equation \eqref{eq} is a new model of singular fully nonlinear elliptic equation featuring an inhomogeneous degenerate term modelled upon the double phase integrand
\begin{flalign}\label{occurring}
H(x,z):=\left[\snr{z}^{p}+a(x)\snr{z}^{q}\right] \,,\quad 0\,,\qquad  1< p < q.
\end{flalign}
Introduced in the variational setting by V. V. Zhikov \cite{z1,z2,z3} in order to study homogeneization model problems and the occurrence of Lavrentiev phenomenon, functionals of type
\begin{flalign}\label{vardp}
w\mapsto \int H(x,Dw) \ \dx
\end{flalign}
are a particular instance of variational integrals with $(p,q)$-growth, first studied by Marcellini in \cite{ma1,ma2}. They are relevant in Materials Science since they can be used to describe the behaviour of strongly anisotropic materials whose hardening properties, linked to the gradient growth exponent, change with the point. In particular, a mixture of two different materials, with hardening exponents $p$ and $q$ respectively, can be realized according to the geometry dictated by the zero set of the coefficient $a(\cdot)$, i.e., $\{a(x)=0\}$. More details on this point can be found in \cite{comi}. The regularity theory for minimizers of \eqref{vardp} attracted lots of attention recently. We refer to \cite{bacomiha, bacomi,comi,comib} for a rather comprehensive account on the matter and for an explanation of the peculiar problems occurring when considering mixed degenerate structures as the one arising from \eqref{occurring}. For example, connections with Harmonic Analysis, initially established in \cite{comi, comib}, have been exploited in \cite{hastojfa}. Linkages with interpolation methods \cite{hastotams}, and Calder\'on-Zygmund estimates \cite{comicz, demicz}, have also been established, while, on a more applied sides, applications to image restorations problems have been recently given \cite{hastok}. See also \cite{chde} for the obstacle problem and some potential theoretic considerations, \cite{demi} for the manifold constrained case and \cite{depa} for the regularity features of viscosity solutions of equations related to the fractional Double-Phase integral
\begin{flalign*}
w \mapsto \int \frac{\snr{w(x)-w(y)}^{p}}{\snr{x-y}^{n+sp}}+a(x,y)\frac{\snr{w(x)-w(y)}^{q}}{\snr{x-y}^{n+tq}}\dy.
\end{flalign*}
This last paper is particularly important in our setting as it provides another instance of the basic regularity assumptions we are going to consider here; see comments after Theorem \ref{mor}. 
 
Our result brings the double phase energy into the realm of fully nonlinear elliptic equations, under sharp assumptions. Precisely, we prove the following:
\begin{theorem}\label{reg}
Under assumptions \eqref{ell} and \eqref{assf}-\eqref{assff}, let $u\in C(\Omega)$ be a viscosity solution of problem \eqref{eq}. Then there exists $\gamma=\gamma(n,\lambda,\Lambda,p)\in (0,1)$ such that $u\in C^{1,\gamma}_{loc}(\Omega)$ and, if $U\Subset \Omega$ is any open set there holds 
\begin{flalign}\label{essst}
[u]_{1+\gamma;U}\le c\left(1+\nr{u}_{L^{\infty}(\Omega)}+\nr{f}_{L^{\infty}(\Omega)}^{\frac{1}{p+1}}\right),
\end{flalign}
with $c=c(n,\lambda,\Lambda,p,q, \dist(U,\partial \Omega))$.
\end{theorem}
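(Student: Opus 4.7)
The plan is to apply the by-now classical Caffarelli-type approximation/iteration scheme originally developed for degenerate fully nonlinear equations by Imbert--Silvestre and Birindelli--Demengel, suitably adapted to the inhomogeneous double-phase degeneracy $H(x,Du)=\snr{Du}^{p}+a(x)\snr{Du}^{q}$. The starting point is a preliminary local H\"older estimate for $u$ (obtained earlier in the paper via an Ishii--Lions / Krylov--Safonov type argument), which provides the compactness needed to run the approximation step. By translation, normalization and a rescaling $u\mapsto u/K$ with $K\sim 1+\nr{u}_{L^\infty(\Omega)}+\nr{f}_{L^\infty(\Omega)}^{1/(p+1)}$, one reduces matters to proving the estimate in $B_1$ under the smallness assumption $\nr{u}_{L^\infty(B_1)}\leq 1$ and $\nr{f}_{L^\infty(B_1)}\leq \delta_{0}$, where $\delta_{0}$ will be fixed small along the iteration.

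The core of the proof is an approximation lemma: for every $\varepsilon>0$ there exists $\delta=\delta(\varepsilon,n,\lambda,\Lambda,p)>0$ such that any viscosity solution of \eqref{eq} in $B_1$ with $\nr{u}_{\infty}\leq 1$ and $\nr{f}_{\infty}\leq\delta$ is $\varepsilon$-close in $L^\infty(B_{1/2})$ to some function $h$ solving the homogeneous limit equation $F(D^{2}h)=0$ in $B_{1/2}$. This is proved by contradiction: a sequence $u_{j}$ violating the statement admits, thanks to the preliminary H\"older estimate, a uniformly convergent subsequence with limit $u_{\infty}$; viscosity stability, combined with the fact that $f_{j}\to 0$ while the coefficient $H(\cdot,Du_{j})$ stays locally bounded, forces $u_{\infty}$ to satisfy $F(D^{2}u_{\infty})=0$ in $B_{1/2}$, contradicting the standing assumption.

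The iteration is then run in the spirit of Caffarelli. One inductively constructs affine functions $L_{k}(y)=a_{k}+b_{k}\cdot y$ with
\begin{equation*}
\sup_{B_{r^{k}}}\snr{u-L_{k}}\leq r^{k(1+\gamma)},\qquad \snr{a_{k+1}-a_{k}}+r^{k}\snr{b_{k+1}-b_{k}}\leq C r^{k(1+\gamma)},
\end{equation*}
for suitable $r\in(0,1)$ and $\gamma\in(0,1/(p+1))$ small. The inductive step rescales
\begin{equation*}
v_{k}(x) := \frac{u(r^{k}x)-L_{k}(r^{k}x)}{r^{k(1+\gamma)}},
\end{equation*}
so that $v_k$ satisfies
\begin{equation*}
\bigl[\snr{b_{k}+r^{k\gamma}Dv_{k}}^{p}+a(r^{k}x)\snr{b_{k}+r^{k\gamma}Dv_{k}}^{q}\bigr]F(D^{2}v_{k})=r^{k(1-\gamma)}f(r^{k}x),
\end{equation*}
i.e.\ an equation of the same double-phase type with small data. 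Feeding $v_{k}$ into the approximation lemma and exploiting the interior $C^{1,\bar\gamma}$ estimate of Caffarelli for $F$-harmonic functions produces an affine approximation of $v_{k}$, which translates back to the next $L_{k+1}$; summing the geometric series for $(a_{k},b_{k})$ then gives the pointwise $C^{1,\gamma}$ estimate~\eqref{essst}.

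The main obstacle is to guarantee that the family of rescaled equations stays inside a single class to which one and the same approximation lemma applies, uniformly in $k$ and in the slope $b_{k}$. The delicate point is the presence of $b_{k}$ inside the degenerate factor: one handles it by a dichotomy at each scale, either $\snr{b_{k}}\leq r^{k\gamma}$ (genuinely degenerate regime, where factoring out $r^{k\gamma p}$ keeps the equation in the double-phase class with data of order $r^{k(1-\gamma(p+1))}$ and the approximation lemma applies directly), or $\snr{b_{k}}> r^{k\gamma}$ (non-degenerate regime, where $H(\cdot,b_{k}+r^{k\gamma}Dv_{k})$ is bounded away from zero, so that dividing through yields a uniformly elliptic fully nonlinear equation for $v_{k}$ and the classical Caffarelli--Krylov--Safonov theory applies). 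Since $q\geq p$ and $a(\cdot)$ is bounded, the factor $r^{k\gamma(q-p)}$ produced by the $q$-term only shrinks under rescaling, so the double-phase structure is never enhanced by the iteration; this is exactly why the choice $\gamma<1/(p+1)$ suffices and the higher exponent $q$ never appears in the final constant.
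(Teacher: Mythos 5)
Your skeleton (reduction to a smallness regime by the scaling $u\mapsto u/K$ with $K\sim 1+\nr{u}_{L^{\infty}}+\nr{f}_{L^{\infty}}^{1/(p+1)}$, a compactness-based approximation by $F$-harmonic functions, and a Caffarelli-type iteration of affine approximations with $\gamma<\tfrac{1}{p+1}$) is the same as the paper's. The essential difference is where the slope enters: the paper proves the approximation lemma (Lemma \ref{har}) and the H\"older/compactness estimate (Theorem \ref{mor}) directly for the \emph{shifted} equation \eqref{20}, uniformly in an arbitrary vector $\bar{\xi}$, so the iteration of Lemma \ref{iter} runs with no case distinction on the slope $\bar{\xi}_{\kappa}$. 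You instead state the approximation lemma only for the unshifted equation \eqref{eq} and try to absorb the slope $b_{k}$ by a per-scale dichotomy, and this is where your argument has a genuine gap.

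Concretely: in your ``non-degenerate'' branch $\snr{b_{k}}>r^{k\gamma}$, the claim that $H(\cdot,b_{k}+r^{k\gamma}Dv_{k})$ is bounded away from zero is unjustified, because $Dv_{k}$ is not a priori bounded and the factor vanishes exactly where the gradient (of the test function, in the viscosity inequalities) equals $-r^{-k\gamma}b_{k}$; hence you cannot divide through and invoke ``classical Caffarelli--Krylov--Safonov'' theory --- the rescaled equation is still degenerate, merely at a large gradient value, and handling it requires either the theory of equations uniformly elliptic only where the gradient is large or, as in the paper, an approximation lemma valid for arbitrary shifts. In your ``degenerate'' branch, after factoring out $r^{k\gamma p}$ the equation is still a double-phase equation \emph{shifted} by $\tilde{b}_{k}:=r^{-k\gamma}b_{k}$ with $\snr{\tilde{b}_{k}}\le 1$, so the unshifted approximation lemma you formulated does not apply to it either. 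Moreover, your sketch of the compactness step misstates the difficulty: the issue is not whether ``$H(\cdot,Du_{j})$ stays locally bounded'' but that the degenerate factor evaluated at the slope $b$ of the test polynomial can vanish in the limit (the cancellation $\xi_{j}+b\to 0$), in which case the viscosity inequality gives no information on $F_{j}(M)$ and $f_{j}\to 0$ does not rescue the division; this is precisely what the paper's Lemma \ref{har} handles in Cases 2.1--2.2 by perturbing the test function with $\kappa\snr{\Pi_{\Sigma}(x)}$ (the Imbert--Silvestre device), and the uniformity in the shift is why Theorem \ref{mor} is proved for \eqref{20} rather than for \eqref{eq}. A further small inaccuracy: the rescaled operator is $F_{\kappa}(M)=r^{k(1-\gamma)}F(r^{-k(1-\gamma)}M)$, not $F$ itself, so the approximation lemma must be stated uniformly over the $(\lambda,\Lambda)$-elliptic class (the paper's contradiction argument indeed allows varying operators $\bar{F}_{j}$); this is harmless but should be made explicit.
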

The outcome of Theorem \ref{reg} is sharp, in the light of the observation made in \cite[Example 1]{is}, which is consistent with our case when $a(\cdot)\equiv 0$. An important step towards the proof of Theorem \ref{reg}, consists in showing that normalized viscosity solutions of a suitable switched version of problem \eqref{eq} are $\beta_{0}$-H\"older continuous for some $\beta_{0} \in (0,1)$, i.e., 
\begin{theorem}\label{mor}Under assumptions \eqref{ell} and \eqref{assf}-\eqref{assff}, let $\bar{\xi}\in \mathbb{R}^{n}$ be an arbitrary vector and $\bar{u}\in C(B_{1})$ a normalized viscosity solution of
\begin{flalign}\label{20}
\left[\snr{\bar{\xi}+D\bar{u}}^{p}+\bar{a}(x)\snr{\bar{\xi}+D\bar{u}}^{q}\right]\bar{F}(D^{2}\bar{u})=\bar{f}(x) \ \ \mbox{in} \ \ B_{1}.
\end{flalign}
Then $\bar{u}\in C^{0,\beta_{0}}_{loc}(B_{1})$ for some $\beta_{0}\in (0,1)$ and if $B_{\rr}\subset B_{1}$ is any ball, there holds that
\begin{flalign}\label{mor1}
[\bar{u}]_{0,\beta_{0};B_{\rr}}\le c(n,\lambda,\Lambda,p,\rr).
\end{flalign}
\end{theorem}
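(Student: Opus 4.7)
The strategy is the classical Ishii--Lions doubling-of-variables method, tailored to the degeneracy of the double-phase structure. Fix a ball $B_{\rr} \subset B_{1}$ and an arbitrary point $x_{0} \in B_{\rr/2}$, and consider the doubled function
\[
\Phi(x,y) := \bar u(x) - \bar u(y) - L_{1}\,\omega(\snr{x-y}) - L_{2}\bigl(\snr{x-x_{0}}^{2} + \snr{y-x_{0}}^{2}\bigr), \quad (x,y)\in \bar B_{\rr}\times \bar B_{\rr},
\]
where $\omega$ is a strictly concave $C^{2}$ modulus (for concreteness, $\omega(s) = s^{\beta_{0}}$ suitably truncated for $s$ large, with $\beta_{0} \in (0,1)$ to be chosen at the end), and $L_{1}, L_{2}$ are large parameters. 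The quadratic penalty together with the normalization $\nr{\bar u}_{L^{\infty}(B_{1})} \leq 1$ keeps the maximum of $\Phi$ in the interior of $\bar B_{\rr}\times\bar B_{\rr}$. The goal is to show that for $L_{1}$ sufficiently large, depending only on $n, \lambda, \Lambda, p, \rr$ and crucially \emph{not} on $\bar\xi$, the maximum cannot be attained off the diagonal, which is equivalent to the H\"older bound $\snr{\bar u(x) - \bar u(x_{0})} \leq L_{1}\snr{x - x_{0}}^{\beta_{0}}$ on $B_{\rr/2}$.

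Argue by contradiction: suppose the maximum is positive, attained at $(\hat x,\hat y)$ with $\hat x \neq \hat y$. The Jensen--Ishii theorem of sums produces matrices $X, Y$, together with gradients $p_{x}, p_{y}$ both close to the common vector $\eta := L_{1}\omega'(\snr{\hat x - \hat y})(\hat x - \hat y)/\snr{\hat x - \hat y}$, and a matrix inequality controlled by the Hessian of the penalty. A direct spectral analysis based on the strict concavity of $\omega$ shows that $X - Y$ has one eigenvalue bounded above by $c L_{1}\omega''(\snr{\hat x - \hat y})$ (large negative) in the direction $(\hat x - \hat y)/\snr{\hat x - \hat y}$ and $n-1$ remaining eigenvalues of size $O(L_{2})$; the Pucci maximal operator therefore satisfies
\[
\mathcal{M}^{+}_{\lambda,\Lambda}(X - Y) \leq C_{1}L_{2} + c_{2}L_{1}\omega''(\snr{\hat x - \hat y}),
\]
which is very negative once $L_{1}$ is chosen large compared to $L_{2}$.

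The viscosity inequalities at $\hat x$ and $\hat y$ read $H(\hat x, \bar\xi + p_{x})\bar F(X) \geq \bar f(\hat x)$ and $H(\hat y, \bar\xi + p_{y})\bar F(Y) \leq \bar f(\hat y)$, and must be combined with $\bar F(X) - \bar F(Y) \leq \mathcal{M}^{+}_{\lambda,\Lambda}(X - Y)$ so as to force $\snr{\hat x - \hat y}$ above a universal threshold, contradicting $\Phi(\hat x, \hat y) > 0$ once $L_{1}$ is taken large compared to $\nr{\bar u}_{L^{\infty}(B_{1})}$ and $\rr^{-1}$. The principal obstacle is the possible vanishing of $H(\cdot, \bar\xi + p)$: since $\bar\xi$ is arbitrary, no a priori lower bound on $H$ is available, and the multiplicative structure cannot be unpacked blindly. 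I would resolve this via a dichotomy on $\snr{\bar\xi + \eta}$. In the non-degenerate regime $\snr{\bar\xi + \eta} \geq \delta$ for a threshold $\delta > 0$ chosen in terms of $L_{1}, L_{2}$, the bound $H \geq \delta^{p}$ on both sides makes division legitimate, and the resulting Pucci chain yields $\snr{\omega''(\snr{\hat x - \hat y})} \leq (C_{1}L_{2} + 2\nr{\bar f}_{\infty}/\delta^{p})/(c_{2}L_{1})$, hence a lower bound on $\snr{\hat x - \hat y}$ that is incompatible with $\snr{\hat x - \hat y} \leq 2\rr$ for $L_{1}$ large enough. In the residual degenerate case $\snr{\bar\xi + \eta} < \delta$, the relations $p_{x}, p_{y} \approx \eta$ pin $L_{1}\omega'(\snr{\hat x - \hat y})$ to $\snr{\bar\xi}$ up to small errors, forcing a specific scale for $\snr{\hat x - \hat y}$ that in turn is incompatible with $\Phi(\hat x, \hat y) > 0$ and $\nr{\bar u}_{\infty} \leq 1$ once $L_{1}$ is sufficiently large. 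Orchestrating this dichotomy so that $L_{1}, L_{2}, \delta$ depend only on $n, \lambda, \Lambda, p, \rr$ and not on $\bar\xi$, $q$, or $\nr{\bar a}_{\infty}$, is the quantitatively most delicate step of the entire argument, and is what allows the constant in \eqref{mor1} to be structural.
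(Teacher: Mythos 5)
Your overall architecture (Ishii--Lions doubling with a concave modulus plus quadratic localization, Pucci estimate from the matrix inequality, then dividing the viscosity inequalities by the degenerate factor once it is bounded below) is exactly the paper's, and your non-degenerate branch reproduces the paper's computation. The genuine gap is in your dichotomy and, specifically, in the residual case $\snr{\bar\xi+\eta}<\delta$. You claim that in this regime the relation $\snr{\eta}=L_{1}\omega'(\snr{\hat x-\hat y})\approx\snr{\bar\xi}$ forces a scale for $\snr{\hat x-\hat y}$ incompatible with $\Phi(\hat x,\hat y)>0$. It does not: $\Phi(\hat x,\hat y)>0$ only gives $\omega(\snr{\hat x-\hat y})\le 1/L_{1}$, hence $\snr{\eta}\gtrsim \beta_{0}L_{1}^{1/\beta_{0}}$ is huge, and the near-cancellation $\snr{\bar\xi+\eta}<\delta$ then merely tells you that $\snr{\bar\xi}$ is correspondingly huge --- which is perfectly admissible, since $\bar\xi$ is arbitrary and none of your parameters may depend on it. In that configuration $H(\hat x,\bar\xi+p_{x})$ and $H(\hat y,\bar\xi+p_{y})$ can be arbitrarily small, the viscosity inequalities carry no usable information, and no contradiction follows from the ingredients you list. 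So the ``quantitatively most delicate step'' you flag is in fact not closed by the proposal.

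The paper avoids this by making the dichotomy on $\snr{\bar\xi}$ itself, fixed before the doubling argument, against a structural threshold $s_{0}^{-1}$ with $s_{0}\sim (A_{1}+A_{2})^{-1}$, and --- crucially --- by changing the modulus according to the case. If $\snr{\bar\xi}\le s_{0}^{-1}$, one uses $\omega(t)=t^{\beta_{0}}$: then the test gradients satisfy $\min\{\snr{\bar\xi_{\bar x}},\snr{\bar\xi_{\bar y}}\}\gtrsim A_{1}$, which by the choice of $s_{0}$ dominates $\snr{\bar\xi}$, so $\snr{\bar\xi+\bar\xi_{\bar x}},\snr{\bar\xi+\bar\xi_{\bar y}}\ge 2$ and the degenerate regime simply cannot occur. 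If $\snr{\bar\xi}> s_{0}^{-1}$, one rescales the equation by $s=\snr{\bar\xi}^{-1}$ (so the shift becomes a unit vector) and replaces the H\"older modulus by a Lipschitz-type one, $\omega(t)=t-\omega_{0}t^{3/2}$ truncated, whose derivative is bounded by $1$; then $s\snr{\bar\xi_{\bar x}}\le s_{0}(A_{1}+2A_{2})<1/2$, so $\snr{\tilde\xi+s\bar\xi_{\bar x}}\ge 1/2$ and again the factor is bounded below, the price being that this branch yields local Lipschitz rather than H\"older regularity (which still implies the $C^{0,\beta_{0}}$ statement). The moral difference is that with the H\"older modulus the test gradient can be tuned to cancel a large $\bar\xi$, so nondegeneracy must be engineered a priori by matching the modulus to the size of the shift, not checked a posteriori at the maximum point as your $\delta$-dichotomy attempts.
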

We refer to Section \ref{small} for the precise definition of the various quantities involved in the previous statement. Theorem \ref{mor} provides a first compactness result for solutions of \eqref{eq}, which in turn will be fundamental in proving a $\bar{F}$-harmonic approximation lemma, crucial for transferring the regularity from solutions of the homogeneous equation   $$\bar{F}(D^{2}w)=0\quad \mbox{in} \ \ B_{1}$$ 
to solutions of \eqref{eq}. An interesting phenomenon is revealed in Theorem \ref{reg}: in sharp contrast to what happens in the variational setting \cite{bacomi,comi,comib}, where a quantitative H\"older continuity (depending on $p,q$) of $a(\cdot)$ is needed to get regular minima \cite{eslemi,fomami}, here the plain continuity of $a(\cdot)$ suffices. This is accordance to what found in the fractional viscosity setting \cite{depa} (see also \cite{lind}). In fact, to prove our regularity results, we just ask that the coefficient $a(\cdot)$ is continuous and no restriction on the size of the difference $0\le q-p$ is imposed. This makes Theorem \ref{reg} sharp from the viscosity theory viewpoint.
\\
Equation \eqref{eq} is an example of singular fully nonlinear elliptic equations, whose most celebrated prototype is
\begin{flalign}\label{classico}
\snr{Du}^{p}F(D^{2}u)=f\quad \mbox{in} \ \ B_{1},
\end{flalign}
see e.g. \cite{is}. Structures of this type, in the setting of viscosity solutions, often occurs in the theory of stochastic games \cite{apr,bamu}. Several aspects of this class of partial differential equations have already been investigated: comparison principle and Liouville-type theorems \cite{bd1}, properties of eigenvalues and eigenfunctions \cite{bd4,bd5}, Alexandrov-Bakelman-Pucci estimates \cite{dafequ,imb}, Harnack inequalities \cite{dafequ1,imb} and regularity \cite{bd2,bd3,is}. In particular, in order to study possible anisotropic problems, in \cite{bprt} the variable exponent case for the degeneracy is analyzed: precisely, it is shown that viscosity solutions of equations modelled on
\begin{flalign*}
\snr{Du}^{p(x)}F(D^{2}u)=f(x)\quad \mbox{in} \ \ B_{1}
\end{flalign*}
have H\"older continuous gradient. As carefully explained in \cite{bacomiha}, there is a sort of borderline structure between the classical case \label{classico} and a genuinely anisotropic case, which is given in fact by the double phase case. It is indeed the aim of this paper to treat such a fully anisotropic case. We yet notice that related anisotropic structures involving milder transitions are yet considered in the framework of stochastic tug-of-war games in \cite{arr, sita}.\\ 
The paper is organized as follows: in Section \ref{pre} we describe our framework, fully detail the problem and list the main assumptions we adopt. In section \ref{hol} we first explain how to reduce the problem to a smallness regime, then prove that normalized viscosity solutions of a switched version of \eqref{eq} are H\"older continuous. Finally, Section \ref{locreg} is devoted to the proof of Theorem \ref{reg} which crucially relies on a compactness argument leading to the construction, via an iterative procedure, of a uniform modulus of continuity of the difference between the solution and a suitably rescaled plane.
\section{Preliminaries}\label{pre}
We shall split this section in three parts: first, we display our notation, then we collect the main assumptions governing problem \eqref{eq}, and finally we report some well-known results on the theory of viscosity solutions to uniformly elliptic operators. 
\subsection{Notation}
In this paper, $\Omega\subset \mathbb{R}^{n}$, $n\ge 2$ is an open and bounded domain, the open ball of $\mathbb{R}^{n}$ centered at $x_{0}$ with positive radius $\rr$ is denoted by $B_{\rr}(x_{0}):=\left\{x\in \mathbb{R}^{n}\colon \snr{x-x_{0}}<\rr\right\}$. When not relevant, or clear from the context, we will omit indicating the center, $B_{\rr}\equiv B_{\rr}(x_{0})$. In particular, for $\rr=1$ and $x_{0}= 0$, we shall simply denote $B_{1}\equiv B_{1}(0)$. With $\mathcal{S}(n)$ we mean the space of $n\times n$ symmetric matrices. As usual, we denote by $c$ a general constant larger than one. Different occurrences from line to line will be still indicated by $c$ and relevant dependencies from certain parameters will be emphasized using brackets, i.e.: $c(n,p)$ means that $c$ depends on $n$ and $p$. For $g\colon B_{1}\to \mathbb{R}^{k}$ and $U\subset B_{1}$, with $\beta \in (0,1]$ being a given number we shall denote
\begin{flalign*}
[g]_{0,\beta;U}:=\sup_{x,y \in U; x\not=y}\frac{\snr{g(x)-g(y)}}{\snr{x-y}^{\beta}}, \qquad [g]_{0,\beta}:=[g]_{0,\beta;B_{1}}.
\end{flalign*}
It is well known that the quantity defined above is a seminorm and when $[g]_{0,\beta;U}<\infty$, we will say that $g$ belongs to the H\"older space $C^{0,\beta}(U,\mathbb{R}^{k})$. Furthermore, $g\in C^{1,\beta}(U,\mathbb{R}^{k})$ provided that
\begin{flalign*}
[g]_{1+\beta;U}:=\sup_{\rr>0,x\in U}\inf_{\xi\in \mathbb{R}^{n},\kappa\in \mathbb{R}}\sup_{y\in B_{\rr}(x)\cap U}\rr^{-(1+\beta)}\snr{g(y)-\xi\cdot y-\kappa}<\infty.
\end{flalign*}
Finally, given any $n\times n$ matrix $A$, with $\texttt{tr}(A)$ we will denote the trace of $A$, i.e., the sum of all its eigenvalues, by $\texttt{tr}(A^{+})$ the sum of all positive eigenvalues of $A$ and by $\texttt{tr}(A^{-})$ the sum of all negative eigenvalues of $A$.

\subsection{On uniformly elliptic operators}\label{uniop} A continuous map $G\colon \Omega\times \mathbb{R}^{n}\times \mathcal{S}(n)\to \mathbb{R}$ is monotone if
\begin{flalign}\label{mon}
G(x,z,M)\le G(x,z,N)\quad \mbox{whenever} \ \ M,N\in \mathcal{S}(n) \ \ \mbox{satisfy} \ \ M\ge N.
\end{flalign}

The $(\lambda,\Lambda)$-ellipticity condition for an operator $F\colon \mathcal{S}(n)\to \mathbb{R}$ prescribes that, whenever $A,B\in \mathbb{S}(n)$ are symmetric matrices with $B\ge 0$,
\begin{flalign}\label{ell}
\lambda \texttt{tr}(B)\le F(A)-F(A+B)\le \Lambda \texttt{tr}(B)
\end{flalign}
for and some fixed constants $0<\lambda\le \Lambda$. As stressed in \cite{is}, under this definition $F(A):=-\texttt{tr}(A)$ is uniformly elliptic with $\lambda=\Lambda=1$, so the usual Laplace operator is uniformly elliptic. Moreover, it is easy to see that, if $L$ is any fixed, positive constant, then the operator $F_{L}(M):=LF\left(\frac{1}{L}M\right)$ satisfies \eqref{ell} with the same constants $0<\lambda\le\Lambda$. Moreover, \eqref{ell} is also verified by the operator $\tilde{F}(M):=-F(-M)$, in fact, if for $A,B\in \mathcal{S}(n)$ with $B\ge 0$, we set $A_{1}:=-A-B$ we immediately see that
\begin{flalign*}
&\tilde{F}(A)-\tilde{F}(A+B)=F(A_{1})-F(A_{1}+B)\stackrel{\eqref{ell}}{\ge}\lambda \texttt{tr}(B),\nonumber\\ &\tilde{F}(A+B)-\tilde{F}(A)=F(A_{1})-F(A_{1}+B)\stackrel{\eqref{ell}}{\le}\Lambda \texttt{tr}(B).
\end{flalign*}
In the framework of $(\lambda,\Lambda)$-elliptic operators, important concepts are the so-called Pucci extremal operators $\mathcal{M}^{\pm}_{\lambda,\Lambda}$, which are, respectively, the maximum and the minimum of all the uniformly elliptic functions $F$ with $F(0)=0$. In particular they admit the following compact form
\begin{flalign}\label{m-}
\mathcal{M}^{+}_{\lambda,\Lambda}(A)= -\Lambda \texttt{tr}(A^{-})-\lambda \texttt{tr}(A^{+})\quad \mbox{and}\quad \mathcal{M}^{-}_{\lambda,\Lambda}(A)=-\Lambda \texttt{tr}(A^{+})-\lambda \texttt{tr}(A^{-}).
\end{flalign}
With the Pucci operators at hand, we can reformulate \eqref{ell} as 
\begin{flalign}\label{elll}
\mathcal{M}^{-}_{\lambda,\Lambda}(B)\le F(A+B)-F(A)\le \mathcal{M}^{+}_{\lambda,\Lambda}(B),
\end{flalign}
for all $A,B\in \mathcal{S}(n)$. Next, we turn our attention to equation
\begin{flalign}\label{211r}
G_{\xi}(x,Du,D^{2}u):=G(x,\xi+Du,D^{2}u)=0\quad \mbox{in} \ \ \Omega,
\end{flalign}
with $G$ continuous and satisfying \eqref{mon} and $\xi \in \mathbb{R}^{n}$ arbitrary vector. The concept of viscosity solution to \eqref{211r} can be explained as follows:
\begin{definition}\emph{\cite{baim}}\label{def}
A lower semicontinuous function $v$ is a viscosity supersolution of \eqref{211r} if whenever $\varphi\in C^{2}(\Omega)$ and $x_{0}\in \Omega$ is a local minimum point of $v-\varphi$, then 
\begin{flalign*}
G_{\xi}(x_{0},D\varphi(x_{0}),D^{2}\varphi(x_{0}))\ge 0,
\end{flalign*}
while an upper semicontinuous function $w$ is a viscosity subsolution to \eqref{211r} provided that if $x_{0}$ is a local maximum point of $w-\varphi$, there holds
\begin{flalign*}
G_{\xi}(x_{0},D\varphi(x_{0}),D^{2}\varphi(x_{0}))\le 0.
\end{flalign*}
The map $u\in C(\Omega)$ is a viscosity solution of \eqref{211r} if it is a the same time a viscosity subsolution and a viscosity supersolution. 
\end{definition}
Another important notion is the one of subjets and superjets.
\begin{definition}\label{def2}\emph{\cite{baim}}
Let $v\colon \Omega\to \mathbb{R}$ be an upper semicontinuous function and $w\colon \Omega\to \mathbb{R}$ be a lower semicontinuous function.
\begin{itemize}
    \item A couple $(z,X)\in \mathbb{R}^{n}\times \mathcal{S}(n)$ is a superjet of $v$ at $x\in \Omega$ if
    \begin{flalign*}
    v(x+y)\le v(x)+z\cdot y+\frac{1}{2}Xy\cdot y+o(\snr{y}^{2}).
    \end{flalign*}
    \item A couple $(z,X)\in \mathbb{R}^{n}\times \mathcal{S}(n)$ is a subjet of $w$ at $x\in \Omega$ if
    \begin{flalign*}
    w(x+y)\ge w(x)+z\cdot y+\frac{1}{2}Xy\cdot y+o(\snr{y}^{2}).
    \end{flalign*}
    \item A couple $(z,X)\in \mathbb{R}^{n}\times \mathcal{S}(n)$ is a limiting superjet of $v$ ar $x\in \Omega$ if there exists a sequence $\{x_{j},z_{j},X_{j}\}\to_{j\to \infty}\{x,z,X\}$ such that $\{z_{j},X_{j}\}$ is a superjet of $v$ at $x_{j}$ and $v(x_{j})\to_{j\to \infty}v(x)$.
    \item A couple $(z,X)\in \mathbb{R}^{n}\times \mathcal{S}(n)$ is a limiting subjet of $w$ ar $x\in \Omega$ if there exists a sequence $\{x_{j},z_{j},X_{j}\}\to_{j\to \infty}\{x,z,X\}$ such that $\{z_{j},X_{j}\}$ is a subjet of $w$ at $x_{j}$ and $w(x_{j})\to_{j\to \infty}w(x)$.
\end{itemize}
\end{definition}
Now we are in position to present a variation on the celebrated Ishii-Lions lemma, \cite{cil}.
\begin{proposition}\label{p1}\emph{\cite{baim}}
Let $v$ be an upper semicontinuous viscosity subsolution of \eqref{211r}, $w$ a lower semicontinuous viscosity supersolution of \eqref{211r}, $U\Subset \Omega$ an open set and $\psi \in C^{2}(U\times U)$. If $(\bar{x},\bar{y})\in U\times U$ is a local maximum point of $v(x)-w(y)-\psi(x,y)$, then, for any $\iota>0$ there exists a threshold $\hat{\delta}=\hat{\delta}(\iota, \nr{D^{2}\psi})>0$ such that for all $\delta \in (0,\hat{\delta})$ we have matrices $X_{\delta},Y_{\delta}\in \mathcal{S}(n)$ such that
\begin{flalign*}
G_{\xi}(\bar{x},v(\bar{x}),\partial_{x}\psi(\bar{x},\bar{y}),X_{\delta})\le 0\le G_{\xi}(\bar{y},w(\bar{y}),-\partial_{y}\psi(\bar{x},\bar{y}),Y_{\delta}).
\end{flalign*}
and the inequality
\begin{flalign*}
-\frac{1}{\delta}\mbox{Id}\le\begin{bmatrix} X_{\delta} & 0 \\ 0 & -Y_{\delta}\end{bmatrix}\le D^{2}\psi(\bar{x},\bar{y})+\delta\mbox{Id}
\end{flalign*}
holds true.
\end{proposition}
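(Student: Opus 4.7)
The result is a variant of the Crandall--Ishii--Lions theorem on sums (see \cite{cil}), specialized to the monotone operator $G_{\xi}$. My plan is to reduce both $v$ and $w$, via simultaneous sup-/inf-convolution, to pointwise twice-differentiable surrogates, and then to apply Jensen's lemma together with Alexandrov's theorem on the a.e.\ second differentiability of semiconvex functions. Introduce the regularizations
$$v^{\varepsilon}(x):=\sup_{z\in \bar{U}}\Bigl\{v(z)-\frac{\snr{x-z}^{2}}{2\varepsilon}\Bigr\},\qquad w_{\varepsilon}(y):=\inf_{z\in \bar{U}}\Bigl\{w(z)+\frac{\snr{y-z}^{2}}{2\varepsilon}\Bigr\}.$$
A standard computation shows that $v^{\varepsilon}$ is semiconvex and $w_{\varepsilon}$ semiconcave, each with Hessian controlled by $\varepsilon^{-1}\mbox{Id}$ wherever it exists. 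Exploiting \eqref{mon} together with the continuity of $G$, one verifies that $v^{\varepsilon}$ and $w_{\varepsilon}$ remain viscosity sub- and supersolutions of \eqref{211r} respectively, up to an error $\eta(\varepsilon)\to 0$ controlled by the modulus of continuity of $G$ in its spatial argument.

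After adding a strictly convex quadratic perturbation to $-\psi$, which only modifies the upper bound in the matrix inequality by a quantity controlled by $\iota$, I may assume that $(\bar{x},\bar{y})$ is a strict local maximum of $v-w-\psi$. A standard compactness argument then produces maximizers $(\bar{x}^{\varepsilon},\bar{y}^{\varepsilon})$ of $v^{\varepsilon}(x)-w_{\varepsilon}(y)-\psi(x,y)$ converging to $(\bar{x},\bar{y})$, with convergence of the corresponding values. The map $(x,y)\mapsto v^{\varepsilon}(x)-w_{\varepsilon}(y)-\psi(x,y)$ is semiconvex, so Jensen's lemma yields an arbitrarily small linear perturbation whose maximum is attained at a point $(\tilde{x}^{\varepsilon},\tilde{y}^{\varepsilon})$ of twice-differentiability of both $v^{\varepsilon}$ and $w_{\varepsilon}$. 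At such a point the first-order conditions give $Dv^{\varepsilon}(\tilde{x}^{\varepsilon})=\partial_{x}\psi(\tilde{x}^{\varepsilon},\tilde{y}^{\varepsilon})+o(1)$ and $Dw_{\varepsilon}(\tilde{y}^{\varepsilon})=-\partial_{y}\psi(\tilde{x}^{\varepsilon},\tilde{y}^{\varepsilon})+o(1)$, while the second-order maximum condition yields
$$\begin{bmatrix} D^{2}v^{\varepsilon}(\tilde{x}^{\varepsilon}) & 0\\ 0 & -D^{2}w_{\varepsilon}(\tilde{y}^{\varepsilon})\end{bmatrix}\leq D^{2}\psi(\tilde{x}^{\varepsilon},\tilde{y}^{\varepsilon})+o(1),$$
with the complementary lower bound $-\varepsilon^{-1}\mbox{Id}$ immediate from semiconvexity/semiconcavity.

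To conclude, set $X_{\delta}:=D^{2}v^{\varepsilon}(\tilde{x}^{\varepsilon})$, $Y_{\delta}:=D^{2}w_{\varepsilon}(\tilde{y}^{\varepsilon})$, identify $\delta$ with $\varepsilon$, and choose $\hat{\delta}=\hat{\delta}(\iota,\nr{D^{2}\psi})$ so small that the $o(1)$ corrections in the matrix inequality are absorbed into the $\delta\mbox{Id}$ slack, while the gradient discrepancies fall below the $\iota$-tolerance implicit in evaluating $G_{\xi}$ at $(\bar{x},\bar{y})$. Evaluating the viscosity sub-/supersolution inequalities available for $v^{\varepsilon}$ and $w_{\varepsilon}$ at $(\tilde{x}^{\varepsilon},\tilde{y}^{\varepsilon})$, and using continuity of $G$ to transfer the arguments back to $(\bar{x},\bar{y})$, produces the two desired one-sided inequalities for $G_{\xi}$. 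The main technical obstacle is the coordinated passage to the limit in the regularization parameter, the Jensen perturbation and the reference points: this is where the $(\lambda,\Lambda)$-ellipticity \eqref{ell} enters, to ensure that the Hessian approximants $X_{\delta},Y_{\delta}$ stay in a precompact range, together with continuity of $G$ in all variables to preserve both one-sided inequalities in the limit.
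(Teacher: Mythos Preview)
The paper does not prove Proposition~\ref{p1}: it is quoted from \cite{baim} (itself a repackaging of the Crandall--Ishii--Lions theorem on sums from \cite{cil}) and is used as a black box in the proof of Proposition~\ref{ex}. There is therefore no ``paper's own proof'' to compare against.

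Your sketch follows the classical route --- sup/inf convolutions, Jensen's lemma, Alexandrov --- and captures the main ideas correctly. One point is off, however: the final paragraph claims that the $(\lambda,\Lambda)$-ellipticity \eqref{ell} is what keeps the Hessian approximants $X_{\delta},Y_{\delta}$ in a precompact range. This is not the mechanism. The Ishii--Lions lemma holds for \emph{any} degenerate elliptic (i.e.\ monotone in the sense of \eqref{mon}) operator $G$; no uniform ellipticity is required, and indeed in this paper the full operator $G_{\xi}(x,z,M)=[\snr{\xi+z}^{p}+a(x)\snr{\xi+z}^{q}]F(M)-f(x)$ is only degenerate elliptic, not uniformly so. The compactness of the matrices comes directly from the two-sided matrix inequality itself: the lower bound $-\varepsilon^{-1}\mathrm{Id}$ from semiconvexity/semiconcavity and the upper bound $D^{2}\psi+o(1)$ from the maximum condition together pin $X_{\delta}$ and $Y_{\delta}$ in a bounded set of $\mathcal{S}(n)$. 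The passage to the limit then relies only on the continuity of $G$ in all its arguments and on monotonicity \eqref{mon} to preserve the one-sided inequalities. You should remove the appeal to \eqref{ell} and replace it with this observation.
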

\begin{remark}
\emph{In \cite{baim} condition \eqref{mon} appears as an "ellipticity assumption". We shall refer to it as "monotonicity" to avoid any confusion with \eqref{ell}.}
\end{remark}
\subsection{Main assumptions}\label{mass}
When dealing with problems \eqref{eq}-\eqref{20}, the following assumptions will be in force. As mentioned before, the set $\Omega\subset \mathbb{R}^{n}$ is an open and bounded domain. Up to dilations and translations, there is no loss of generality in assuming that $B_{2}\Subset \Omega$. The nonlinear operator $F$ is continuous and $(\lambda,\Lambda)$-elliptic in the sense of \eqref{ell}. Moreover
\begin{flalign}\label{assf}
F\in C(\mathcal{S}(n),\mathbb{R}), \quad F(0)=0.
\end{flalign}
Concerning the non-homogeneous degeneracy term appearing in \eqref{eq}, we shall ask that the exponents $p,q$ and the modulating coefficient $a(\cdot)$ are so that
\begin{flalign}\label{assh}
0\le a(\cdot)\in C(\Omega)\quad \mbox{and} \quad 0<p\le q.
\end{flalign}
Finally, the forcing term $f$ verifies
\begin{flalign}\label{assff}
f\in C(\Omega).
\end{flalign}
All the facts exposed in Section \ref{uniop} easily adjust to our double phase setting simply choosing 
\begin{flalign*}
G_{\xi}(x,z,M)=\left[\left(\snr{\xi+z}^{p}+a(x)\snr{\xi+z}^{q}\right)F(M)-f(x)\right]\quad \mbox{for all} \ \ (x,z,M)\in \Omega\times \mathbb{R}^{n}\times \mathcal{S}(n).
\end{flalign*}
By \eqref{assf} and \eqref{assff} we then know that $G\in C( \Omega\times \mathbb{R}^{n}\times \mathcal{S}(n),\mathbb{R})$. Moreover, the $(\lambda,\Lambda)$-ellipticity of $F$ guarantees that $G_{\xi}$ satisfies \eqref{mon}, so, in particular Definitions \ref{def}-\ref{def2} and Proposition \ref{p1} are available to us.
\begin{remark}
\emph{
In $\eqref{assh}_{1}$ we assumed that the modulating coefficient $a(\cdot)$ is continuous, but all the results proved in this paper hold \emph{verbatim} if we only take $a(\cdot)$ bounded and defined everywhere. This makes Theorem \ref{reg} a solid blueprint for studying further problems in which $a(\cdot)$ depends also from $u$ and is allowed to have discontinuities.}
\end{remark}
\subsection{The homogeneous problem}
Viscosity solutions of the homogeneous problem
\begin{flalign}\label{probhom}
F(D^{2}v)=0\quad \mbox{in} \ \ B_{1}
\end{flalign}
will have a crucial role in the proof of the main results of this paper.
\begin{definition}\label{dhar}
Let $F$ be as in \eqref{ell}-\eqref{assf}. A function $h \in C(U)$ is said to be $F$-harmonic in $B_{1}$ if it is a viscosity solution of \eqref{probhom}.
\end{definition}
As one could expect, maps as in Definition \ref{dhar} have good regularity properties, as the next proposition shows. For a proof, we refer to \cite[Corollary 5.7]{caca}.
\begin{proposition}\label{rhar}\emph{\cite{caca}}
Let $F$ be as in \eqref{ell}-\eqref{assf} and $h\in C(B_{1})$ be a viscosity solution of \eqref{probhom}. Then, there exist $\alpha=\alpha(n,\lambda,\Lambda)\in (0,1)$ and $c=c(n,\lambda,\Lambda)>0$ such that
\begin{flalign}\label{031}
\nr{h}_{C^{1,\alpha}(\bar{B}_{1/2})}\le c\nr{h}_{L^{\infty}(B_{1})}.
\end{flalign}
\end{proposition}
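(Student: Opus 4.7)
The statement is a classical interior $C^{1,\alpha}$ regularity result for fully nonlinear uniformly elliptic equations, originally due to Caffarelli; the plan is to outline the standard three-step strategy combining scaling, Krylov--Safonov estimates, and an iterated flatness-decay scheme.

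First, I would reduce to the normalized setting $\nr{h}_{L^\infty(B_1)} \le 1$ by replacing $h$ with $h/M$, $M := \nr{h}_{L^\infty(B_1)}$; this rescales $F$ to $\tilde F(X) := F(MX)/M$, which is still $(\lambda,\Lambda)$-elliptic with $\tilde F(0) = 0$, so the class of solutions is preserved. Applying \eqref{elll} with $A = 0$ and using $F(0) = 0$, every such $h$ verifies the Pucci inequalities $\mathcal{M}^{-}_{\lambda,\Lambda}(D^2 h) \le 0 \le \mathcal{M}^{+}_{\lambda,\Lambda}(D^2 h)$ in the viscosity sense. The Krylov--Safonov H\"older estimate then yields $\bar\alpha = \bar\alpha(n,\lambda,\Lambda) \in (0,1)$ and $C=C(n,\lambda,\Lambda)$ with $[h]_{0,\bar\alpha;B_{3/4}} \le C$.

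The upgrade to $C^{1,\alpha}$ proceeds through iterated flatness decay. The key lemma states that there exist $\alpha \in (0,1)$, $\rho \in (0,1/2)$, and $C_0>0$, depending only on $n,\lambda,\Lambda$, such that for every $F$-harmonic $v$ in $B_1$ with $\nr{v}_{L^\infty(B_1)} \le 1$ one can find an affine function $L$ with $\snr{L(0)}+\snr{\nabla L} \le C_0$ and
$$
\nr{v - L}_{L^\infty(B_\rho)} \le \rho^{1+\alpha}.
$$
Iterating this lemma on the rescaled solutions $v_k(x) := \rho^{-k(1+\alpha)}\bigl(h(\rho^k x) - L_k(\rho^k x)\bigr)$ builds a sequence $\{L_k\}$ of affine approximants at the dyadic scales $\rho^k$; since the coefficients of $\{L_k\}$ form Cauchy sequences in a finite-dimensional space, they converge to an affine $L_\infty$ providing a first-order Taylor expansion of $h$ at the origin with sharp $C^{1,\alpha}$ decay. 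Iterating at every $x_0 \in B_{1/2}$ and exploiting translation invariance of the equation delivers \eqref{031}.

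The flatness-decay lemma itself would be proved by compactness and contradiction. If it fails, there exist $(\lambda,\Lambda)$-elliptic operators $F_k$ with $F_k(0) = 0$ and $F_k$-harmonic $h_k \in C(B_1)$, $\nr{h_k}_{L^\infty(B_1)} \le 1$, admitting no affine approximant of the required type. The uniform $C^{\bar\alpha}$ bound from step one and Arzel\`a--Ascoli furnish, up to subsequences, $h_k \to h_\infty$ locally uniformly in $B_1$ and $F_k \to F_\infty$ locally uniformly, with $F_\infty$ still $(\lambda,\Lambda)$-elliptic. Stability of viscosity solutions forces $h_\infty$ to be $F_\infty$-harmonic in $B_1$, and the contradiction is obtained by producing a sharp affine approximation of $h_\infty$ at the origin that the $h_k$ must eventually inherit. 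The main obstacle is to accomplish this without circularly invoking the $C^{1,\alpha}$ conclusion being proved; the standard resolution, executed in \cite[Chapter 5]{caca}, bypasses the difficulty via a Liouville-type theorem for global viscosity solutions of Pucci extremal equations with sublinear growth at infinity, which must be affine. Since Proposition \ref{rhar} is invoked here as a black-box from \cite{caca}, I defer to that reference for the detailed technical execution.
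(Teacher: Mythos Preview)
The paper does not prove this proposition; it cites \cite[Corollary~5.7]{caca} and uses it as a black box, so there is no in-paper argument to compare against beyond the reference itself. Your outline follows Caffarelli's compactness/improvement-of-flatness template, which is the scheme for the $x$-dependent problem $F(D^{2}u,x)=f(x)$ (Theorem~5.6 in \cite{caca}); there one approximates by solutions of the frozen equation and \emph{assumes} interior $C^{1,\bar\alpha}$ regularity for the frozen problem as an input hypothesis. For the autonomous equation $F(D^{2}u)=0$ that input is exactly the statement being proved, and the Liouville-type fix you attribute to \cite[Chapter~5]{caca} is not what is done there, nor does it close the loop as written: a Liouville theorem classifies entire solutions, whereas the limit $h_{\infty}$ produced by your compactness step lives only on $B_{1}$.

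The proof actually given in \cite{caca} for Corollary~5.7 bypasses compactness altogether via a difference-quotient bootstrap. Since the equation is translation invariant, for every $h\in\mathbb{R}$ and $e\in\mathbb{S}^{n-1}$ the function $u(\cdot+he)-u$ belongs to the Pucci class $S(\lambda,\Lambda,0)$; applying the Krylov--Safonov $C^{\bar\alpha}$ estimate to the normalized quotients $\bigl(u(x+he)-u(x)\bigr)/\snr{h}^{\beta}$ upgrades $C^{\beta}$ to $C^{\beta+\bar\alpha}$ in finitely many iterations until one crosses $C^{1}$ and lands in $C^{1,\alpha}$. This is shorter and sidesteps the circularity you flagged.
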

\begin{remark}
\emph{Proposition \ref{rhar} in particular states that if $h\in C(B_{1})$ is $F$-harmonic, then it is $C^{1,\alpha}$ around zero, which means that for all $\rr\in (0,1)$ there exists a $\xi_{\rr}\in \mathbb{R}^{n}$ such that}
\begin{flalign}\label{030}
\osc_{B_{\rr}}(h-\xi_{\rr}\cdot x)\le c(n,\lambda,\Lambda)\rr^{1+\alpha}.
\end{flalign}
\emph{Now fix $\sigma\in (0,1)$ be so small that}
\begin{flalign}\label{delta}
c\sigma^{\alpha}<\frac{1}{4},
\end{flalign}
\emph{where $c=c(n,\lambda,\Lambda)$ is the constant appearing in \eqref{031} and let $\xi_{\sigma}\in \mathbb{R}^{n}$ be the corresponding vector in \eqref{030}. According to the choice in \eqref{delta}, \eqref{030} reads as}
\begin{flalign}\label{0331}
\osc_{B_{\sigma}}(h-\xi_{\sigma}\cdot x)\le \frac{1}{4}\sigma, \quad \mbox{with}\quad \sigma=\sigma(n,\lambda,\Lambda).
\end{flalign}
\emph{This will be helpful later on}.
\end{remark}
\section{$\beta_{0}$-H\"older continuity}\label{hol}
In this section we will prove that normalized viscosity solutions of problem
\begin{flalign}\label{shift}
\left[\snr{\xi+Du}^{p}+a(x)\snr{\xi+Du}^{q}\right]F(D^{2}u)=f(x)\quad \mbox{in} \ \ \Omega
\end{flalign}
where $\xi\in \mathbb{R}^{n}$ is any vector, are locally $\beta_{0}$-H\"older continuous for some $\beta_{0}\in (0,1)$. A direct consequence of this, is equicontinuity for sequences of normalized viscosity solutions to certain problems of the type \eqref{eq}, see the proof of Lemma \ref{har} in Section \ref{locreg}.
\subsection{Smallness regime}\label{small}
In this part, we use the scaling features of the shifted operator in \eqref{shift} to trace the problem back to a smallness regime. In other terms, we blow and scale $u$ in order to construct another map $\bar{u}$, solution in $B_{1}$ of a problem having the same structure as \eqref{shift}, and such that, for a given $\varepsilon>0$,
\begin{flalign}\label{sreg}
\osc_{B_{1}}(\bar{u})\le 1\quad \mbox{and} \quad \nr{\bar{f}}_{L^{\infty}(B_{1})}\le \varepsilon,
\end{flalign}
where $\bar{f}$ is a suitable modified version of the forcing term appearing in \eqref{shift}. Under these conditions, $\bar{u}$ is called "normalized viscosity solution". Let us show this construction. Set
\begin{flalign}\label{k}
K:=2\left(1+\nr{u}_{L^{\infty}(\Omega)}+\nr{f}_{L^{\infty}(\Omega)}^{\frac{1}{p+1}}\right)
\end{flalign}
and let 
\begin{flalign}\label{mm}
m\in \left(0,\min\left\{1,\frac{\diam(\Omega)}{4}\right\}\right)
\end{flalign}
be a constant whose size will be quantified later on. Notice that, if $u\in C(\Omega)$ is a viscosity solution to \eqref{shift} and $x_{0}\in \Omega$, then clearly $u$ is a continuous viscosity solution of \eqref{shift} in $B_{m}(x_{0})$, with $m$ as in \eqref{mm}. Now, for $x\in B_{1}$, $M\in \mathcal{S}(n)$, $K$ and $m$ as in \eqref{k}-\eqref{mm} respectively, define the following quantities:
\begin{flalign*}
&\bar{u}(x):=\frac{u(x_{0}+mx)}{K}, \quad \bar{a}(x):=\left(\frac{K}{m}\right)^{q-p}a(x_{0}+mx),\\
&\bar{F}(M):=\frac{m^{2}}{K}F\left(\frac{K}{m^{2}}M\right), \quad \bar{f}(x):=\frac{m^{p+2}}{K^{p+1}}f(x_{0}+mx).
\end{flalign*}
Since $u$ is a viscosity solution to \eqref{shift} in $B_{m}(x_{0})$, it is easy to see that $\bar{u}$ is a viscosity solution of
\begin{flalign*}
\left[\snr{\bar{\xi}+D\bar{u}}^{p}+\bar{a}(x)\snr{\bar{\xi}+D\bar{u}}^{q}\right]\bar{F}(D^{2}\bar{u})=\bar{f}(x) \quad \mbox{in} \ \ B_{1},
\end{flalign*}
where $\bar{\xi}:=(m/K)\xi$. In particular, when $\xi=0$, we have \eqref{eq} in smallness regime 
\begin{flalign}\label{eqs}
\left[\snr{D\bar{u}}^{p}+\bar{a}(x)\snr{D\bar{u}}^{q}\right]\bar{F}(D^{2}\bar{u})=\bar{f}(x)\quad \mbox{in} \ \ B_{1}.
\end{flalign}
By definition, there holds
\begin{flalign}\label{19}
\osc_{B_{1}}(\bar{u})\le 1,\quad \nr{\bar{a}}_{L^{\infty}(B_{1})}\le \left(\frac{K}{m}\right)^{q-p}\nr{a}_{L^{\infty}(\Omega)}, \quad \nr{\bar{f}}_{L^{\infty}(B_{1})}\le m^{p+2},
\end{flalign}
moreover, a quick computation shows that, since $F$ satisfies \eqref{ell}, $\bar{F}$ is $(\lambda,\Lambda)$-elliptic as well. Now, for arbitrary $\varepsilon>0$, we fix $m=\varepsilon^{\frac{1}{p+2}}$, it follows that $\nr{\bar{f}}_{L^{\infty}(B_{1})}\le \varepsilon$, therefore $\bar{u}$ is in smallness regime. Finally, notice that if $\bar{u}$ is a solution of equation \eqref{20}, then $\bar{u}_{c}:=\bar{u}+c$ for any $c\in \mathbb{R}$ is a solution as well, so there is no loss of generality in taking $\bar{u}(0)=0$. We will assume this throughout the paper.
\begin{remark}
\emph{Owing to the non-homogeneity of problem \eqref{eq}, the scaling factor $m$ appears also in the expression of $\bar{a}$, thus leading to the bound $\eqref{19}_{2}$. As we shall see, $m$ will never influence the constants appearing in the forthcoming estimates and it will ultimately depend only from $(n,\lambda,\Lambda,p,q)$.}
\end{remark}
\begin{remark}\label{rem1}
\emph{Clearly, it is enough to prove Theorem \ref{reg} for $\bar{u}\in C(B_{1})$ solution of \eqref{20}. In fact, as soon as we know that}
\begin{flalign*}
[\bar{u}]_{1+\gamma;B_{1/2}}\le c(n,\lambda,\Lambda,p,q),
\end{flalign*}
\emph{then, from the definitions given before, it directly follows that, after scaling,}
\begin{flalign*}
[u]_{1+\gamma;B_{m/2}(x_{0})}\le \frac{cK}{m^{1+\gamma}}\le c(n,\lambda,\Lambda,p,q)\left(1+\nr{u}_{L^{\infty}(\Omega)}+\nr{f}_{L^{\infty}(\Omega)}^{\frac{1}{p+1}}\right),
\end{flalign*}
\emph{so, after a standard covering argument we can conclude that $u\in C^{1,\gamma}_{loc}(\Omega)$ as stated in Theorem \ref{reg} and \eqref{essst} directly follows via a standard covering argument}.
\end{remark}
\subsection{Proof of Theorem \ref{mor}}
The proof of Theorem \ref{mor} is a direct consequence of Proposition \ref{ex} below.
\begin{proposition}\label{ex}
Let $\bar{u}\in C(B_{1})$ be a normalized viscosity solution of \eqref{20} and $\tilde{x}\in B_{1}$ be any point with $B_{1/2}(\tilde{x})\Subset B_{1}$. There exist a positive number $s_{0}=s_{0}(n,\lambda,\Lambda,p,\rr)$ and a threshold $\rr_{*}\in \left(0,\frac{1}{2}\right)$ such that if $\rr\in \left(0,\rr_{*}\right)$, then
\begin{itemize}
    \item $\snr{\bar{\xi}}>s_{0}^{-1}\Rightarrow\bar{u}\in \mbox{Lip}_{loc}(B_{\rr/2})$;
    \item $\snr{\bar{\xi}}\le s_{0}^{-1}\Rightarrow\bar{u}\in C^{0,\beta_{0}}_{loc}(B_{\rr/2})$, for some $\beta_{0}\in (0,1)$.
\end{itemize}
\begin{proof}
We fix the threshold 
\begin{flalign}\label{rr}
\rr_{*}:=\frac{1}{20000}
\end{flalign}
and take $\rr\in \left(0,\rr_{*}\right)$. We shall prove that there are two constants $A_{1}=A_{1}(n,\lambda,\Lambda,p,\rr)$ and $A_{2}=A_{2}(\rr)$ such that
\begin{flalign}\label{m}
\mathcal{L}(\tilde{x}):=\sup_{x,y\in B_{\rr}}\left(\bar{u}(x)-\bar{u}(y)-A_{1}\omega(\snr{x-y})-A_{2}\left(\snr{x-\tilde{x}}^{2}+\snr{y-\tilde{x}}^{2}\right)\right)\le 0, 
\end{flalign}
for all $\tilde{x}\in B_{\rr/2}$. In \eqref{m},
\begin{flalign}\label{omega}
\omega(t):=t^{\beta_{0}} \ \ \mbox{if} \ \ \snr{\bar{\xi}}\le s_{0}^{-1},\quad \omega(t):=\begin{cases}\ t-\omega_{0}t^{3/2}\ \ &\mbox{if} \ \ t\le t_{0}\\
\ \omega(t_{0}) \ \ &\mbox{if} \ \ t>t_{0} 
\end{cases}\ \ \mbox{if} \ \ \snr{\bar{\xi}}>s_{0}^{-1},
\end{flalign}
where
\begin{flalign}\label{beta0}
\beta_{0}\in \left(\frac{1}{4},\frac{1}{2}\right)
\end{flalign}
is any number, $t_{0}:=\left(\frac{2}{3\omega_{0}}\right)^{2}$, $\omega_{0}\in \left(0,\frac{2}{3}\right)$ is such that $t_{0}\ge 1$ and 
\begin{flalign}\label{s0}
s_{0}:=\frac{1}{16(A_{1}+A_{2}+1)}.
\end{flalign}
For reasons that will be clear in a few lines, we set
\begin{flalign}\label{a1}
A_{1}:=\frac{(2A_{2}+1)\left[80+2^{4(p+1)}\left(\Lambda(n-1)+\lambda+1\right)\right]}{\omega_{0}\min\{\lambda,1\}}
\end{flalign}
and
\begin{flalign}\label{a2}
A_{2}:=(4/\rr)^{2}.
\end{flalign}
By contradiction, let us assume that 
\begin{flalign}\label{contr}
\mbox{there exists} \ \ \tilde{x}\in B_{\rr/2} \ \ \mbox{such that} \ \ \mathcal{L}(\tilde{x})>0 \ \ \mbox{for all positive} \ \ A_{1},A_{2}. 
\end{flalign}
To reach a contradiction, we consider the auxiliary functions
\begin{flalign*}
\begin{cases}
\ \psi(x,y):=A_{1}\omega(\snr{x-y})+A_{2}\left(\snr{x-\tilde{x}}^{2}+\snr{y-\tilde{x}}^{2}\right)\\
\ \phi(x,y):=\bar{u}(x)-\bar{u}(y)-\psi(x,y).
\end{cases}
\end{flalign*}
Let $(\bar{x},\bar{y})\in \bar{B}_{\rr}\times \bar{B}_{\rr}$ is a point of maximum for $\phi$. By \eqref{contr}, $\phi(\bar{x},\bar{y})=\mathcal{L}(\tilde{x})>0$, thus
\begin{flalign*}
A_{1}\omega(\snr{\bar{x}-\bar{y}})+A_{2}\left(\snr{\bar{x}-\tilde{x}}^{2}+\snr{\bar{y}-\tilde{x}}^{2}\right)\le \bar{u}(\bar{x})-\bar{u}(\bar{y})\stackrel{\eqref{sreg}_{1}}{\le}1.
\end{flalign*}
The choice made in \eqref{a2} forces $\bar{x},\bar{y}$ to belong to the interior of $B_{\rr}$. In fact, plugging \eqref{a2} in the previous display, we get:
\begin{flalign}\label{small12}
\snr{\bar{x}}\le \snr{\bar{x}-\tilde{x}}+\snr{\tilde{x}}\le \frac{3\rr}{4}\quad \mbox{and}\quad \snr{\bar{y}}\le \snr{\bar{y}-\tilde{x}}+\snr{\tilde{x}}\le \frac{3\rr}{4}.
\end{flalign}
Moreover, $\bar{x}\not =\bar{y}$, otherwise $\mathcal{L}(\tilde{x})=\phi(\bar{x},\bar{y})=0$ and \eqref{m} would be immediately verified. This last observation clarifies that $\psi$ is smooth in a sufficiently small neighborhood of $(\bar{x},\bar{y})$, so the following position is meaningful:
\begin{flalign*}
&\bar{\xi}_{\bar{x}}:=\partial_{x}\psi(\bar{x},\bar{y})=A_{1}\omega'(\snr{\bar{x}-\bar{y}})\frac{\bar{x}-\bar{y}}{\snr{\bar{x}-\bar{y}}}+2A_{2}(\bar{x}-\tilde{x}),\\
&\bar{\xi}_{\bar{y}}:=-\partial_{y}\psi(\bar{x},\bar{y})=A_{1}\omega'(\snr{\bar{x}-\bar{y}})\frac{\bar{x}-\bar{y}}{\snr{\bar{x}-\bar{y}}}-2A_{2}(\bar{y}-\tilde{x}).
\end{flalign*}
All in all, $\phi$ attains its maximum in $(\bar{x},\bar{y})$ inside $B_{\rr}\times B_{\rr}$ and $\psi$ is smooth around $(\bar{x},\bar{y})$, so Proposition \ref{p1} applies: for any $\iota>0$ we can find a threshold $\hat{\delta}=\hat{\delta}(\iota,\nr{D^{2}\psi})$ such that for all $\delta \in (0,\hat{\delta})$ the couple $(\bar{\xi}_{\bar{x}},X_{\delta})$ is a limiting subjet of $\bar{u}$ at $\bar{x}$ and the couple $(\bar{\xi}_{\bar{y}},Y_{\delta})$ is a limiting superjet of $\bar{u}$ at $\bar{y}$ and the matrix inequality
\begin{flalign}\label{m2}
\begin{bmatrix}
X_{\delta} & 0 \\ 0 & -Y_{\delta} 
\end{bmatrix}\le \begin{bmatrix}
Z & -Z \\ -Z & Z 
\end{bmatrix}+(2A_{2}+\delta)\mbox{Id}
\end{flalign}
holds, where we set
\begin{flalign*}
Z:=&A_{1}(D^{2}\omega)(\snr{\bar{x}-\bar{y}})\nonumber \\
=&A_{1}\left[\frac{\omega'(\snr{\bar{x}-\bar{y}})}{\snr{\bar{x}-\bar{y}}}\mbox{Id}+\left(\omega''(\snr{\bar{x}-\bar{y}})-\frac{\omega'(\snr{\bar{x}-\bar{y}})}{\snr{\bar{x}-\bar{y}}}\right)\frac{(\bar{x}-\bar{y})\otimes (\bar{x}-\bar{y})}{\snr{\bar{x}-\bar{y}}^{2}}\right].
\end{flalign*}
We fix $\delta=\min\left\{1,\frac{\hat{\delta}}{4}\right\}$ and apply \eqref{m2} to vectors of the form $(z,z)\in \mathbb{R}^{2n}$, to obtain 
\begin{flalign*}
\langle(X_{\delta}-Y_{\delta})z,z \rangle\le (4A_{2}+2)\snr{z}^{2}.
\end{flalign*}
This means that 
\begin{flalign}\label{alle}
\mbox{all the eigenvalues of} \ \ X_{\delta}-Y_{\delta} \ \ \mbox{are less than or equal to} \ \ 2(2A_{2}+1).
\end{flalign}
In particular, applying \eqref{m2} to the vector $\bar{z}:=\left(\frac{\bar{x}-\bar{y}}{\snr{\bar{x}-\bar{y}}},\frac{\bar{y}-\bar{x}}{\snr{\bar{x}-\bar{y}}}\right)$, we get
\begin{flalign*}
\left \langle (X_{\delta}-Y_{\delta})\frac{\bar{x}-\bar{y}}{\snr{\bar{x}-\bar{y}}},\right.&\left.\frac{\bar{x}-\bar{y}}{\snr{\bar{x}-\bar{y}}} \right \rangle\le 2(2A_{2}+1)+4A_{1}\omega''(\snr{\bar{x}-\bar{y}}).
\end{flalign*}
This yields in particular that
\begin{flalign}\label{eneg}
\mbox{at least one eigenvalue of} \ \ X_{\delta}-Y_{\delta} \ \ \mbox{is less than} \ \ 2(2A_{2}+1)+4A_{1}\omega''(\snr{\bar{x}-\bar{y}}).
\end{flalign}
At this point we study separately the two cases $\snr{\bar{\xi}}>s_{0}^{-1}$ and $\snr{\bar{\xi}}\le s_{0}^{-1}$.\\\\
\emph{Case 1: $\snr{\bar{\xi}}>s_{0}^{-1}$}. Define
\begin{flalign*}
s:=\snr{\bar{\xi}}^{-1}\in (0,s_{0}), \quad \tilde{\xi}:=s\bar{\xi}, \quad \tilde{a}(x):=s^{p-q}\bar{a}(x), \quad \tilde{f}(x):=s^{p}\bar{f}(x)
\end{flalign*}
and rewrite \eqref{20} as 
\begin{flalign}\label{104}
\left[\snr{\tilde{\xi}+sD\bar{u}}^{p}+\tilde{a}(x)\snr{\tilde{\xi}+sD\bar{u}}^{q}\right]\bar{F}(D^{2}\bar{u})=\tilde{f}(x)\quad \mbox{in} \ \ B_{1}.
\end{flalign}
Notice that, by \eqref{sreg} and \eqref{a2}, $\nr{\tilde{f}}_{L^{\infty}(B_{1})}\le s_{0}^{p}\varepsilon<\varepsilon$. Moreover, expanding the expression of $\omega$ in \eqref{eneg} (keep $\eqref{omega}_{2}$ in mind) and recalling the choice we made in \eqref{a1}-\eqref{a2}, we see that
\begin{flalign*}
2(2A_{2}+1)+4A_{1}\omega''(\snr{\bar{x}-\bar{y}})\le 2(2A_{2}+1)-3A_{1}\omega_{0}<0,
\end{flalign*}
where we also used that $\snr{\bar{x}-\bar{y}}<1$. This means that at least one eigenvalue of $X_{\delta}-Y_{\delta}$ is negative, therefore by $\eqref{m-}_{2}$, \eqref{alle} and \eqref{eneg} we have
\begin{flalign}\label{visco3}
\mathcal{M}^{-}_{\lambda,\Lambda}(X_{\delta}-Y_{\delta})\ge -2(2A_{2}+1)\left[\Lambda(n-1)+\lambda\right]+3\lambda\omega_{0}A_{1}.
\end{flalign}
With $\bar{\xi}_{\bar{x}}$, $\bar{\xi}_{\bar{y}}$ computed before, we write the two viscosity inequalities
\begin{flalign}\label{visco}
\begin{cases}
\ \left[\snr{\tilde{\xi}+s\bar{\xi}_{\bar{x}}}^{p}+\tilde{a}(\bar{x})\snr{\tilde{\xi}+s\bar{\xi}_{\bar{x}}}^{q}\right]\bar{F}(X_{\delta})\le \tilde{f}(\bar{x})\\
\ \left[\snr{\tilde{\xi}+s\bar{\xi}_{\bar{y}}}^{p}+\tilde{a}(\bar{y})\snr{\tilde{\xi}+s\bar{\xi}_{\bar{y}}}^{q}\right]\bar{F}(Y_{\delta})\ge \tilde{f}(\bar{y}).
\end{cases}
\end{flalign}
The choice we made in \eqref{s0} then yields that
\begin{flalign}\label{visco1}
\min\{\snr{\tilde{\xi}+s\bar{\xi}_{\bar{x}}},\snr{\tilde{\xi}+s\bar{\xi}_{\bar{y}}}\}\ge \frac{1}{2}
\end{flalign}
and, recalling also \eqref{elll}, we see that
\begin{flalign}\label{visco2}
\bar{F}(X_{\delta})\ge \bar{F}(Y_{\delta})+\mathcal{M}^{-}_{\lambda,\Lambda}(X_{\delta}-Y_{\delta}).
\end{flalign}
Combining the inequalities in the previous display we eventually get
\begin{flalign*}
&\frac{\tilde{f}(\bar{x})}{\left[\snr{\tilde{\xi}+s\bar{\xi}_{\bar{x}}}^{p}+\tilde{a}(\bar{x})\snr{\tilde{\xi}+s\bar{\xi}_{\bar{x}}}^{q}\right]}\stackrel{\eqref{visco}_{1}}{\ge}\bar{F}(X_{\delta})\stackrel{\eqref{visco2}}{\ge}\bar{F}(Y_{\delta})+\mathcal{M}^{-}_{\lambda,\Lambda}(X_{\delta}-Y_{\delta})\nonumber \\
&\qquad \stackrel{\eqref{visco3},\eqref{visco}_{2}}{\ge}\frac{\tilde{f}(\bar{y})}{\left[\snr{\tilde{\xi}+s\bar{\xi}_{\bar{y}}}^{p}+\tilde{a}(\bar{x})\snr{\tilde{\xi}+s\bar{\xi}_{\bar{y}}}^{q}\right]}-2(2A_{2}+1)\left[\Lambda(n-1)+\lambda\right]+3\lambda\omega_{0}A_{1}.
\end{flalign*}
Since by \eqref{a1},
\begin{flalign}\label{visco4}
-2(2A_{2}+1)\left[\Lambda(n-1)+\lambda\right]+3\lambda\omega_{0}A_{1}>0,
\end{flalign}
we can complete the inequality in the previous display as follows:
\begin{flalign*}
2\ge 2\varepsilon\stackrel{\eqref{sreg}_{2},\eqref{s0}}{\ge}2\nr{\tilde{f}}_{L^{\infty}(B_{1})}\stackrel{\eqref{visco1},\eqref{visco4}}{\ge}-2^{p+1}(2A_{2}+1)\left[\Lambda(n-1)+\lambda\right]+2^{-p}3\lambda\omega_{0}A_{1},
\end{flalign*}
which is not possible, because of \eqref{a1}.\\\\
\emph{Case 2: $\snr{\bar{\xi}}\le s_{0}^{-1}$}. In this case we do not need to rescale \eqref{shift}, but only notice that, by $\eqref{omega}_{1}$ the quantity appearing in \eqref{eneg} can be bounded as
\begin{flalign*}
2(2A_{2}+1)+4A_{1}\omega''(\snr{\bar{x}-\bar{y}})\stackrel{\eqref{rr},\eqref{small12}}{\le} 2(2A_{2}+1)-4A_{1}\beta_{0}(1-\beta_{0})\stackrel{\eqref{a1},\eqref{beta0}}{\le}0,
\end{flalign*}
which means again that at least one eigenvalue of $X_{\delta}-Y_{\delta}$ is negative, thus, by $\eqref{m-}_{2}$, \eqref{alle} and \eqref{eneg} we have
\begin{flalign}
\mathcal{M}^{-}_{\lambda,\Lambda}(X_{\delta}-Y_{\delta})\ge& -2(2A_{2}+1)\left[\Lambda(n-1)+\lambda\right]+4\lambda A_{1}\beta_{0}(1-\beta_{0})\nonumber \\
\stackrel{\eqref{beta0}}{\ge}&-2(2A_{2}+1)\left[\Lambda(n-1)+\lambda\right]+\frac{\lambda A_{1}}{2}.\label{vvisco1}
\end{flalign}
We then compute
\begin{flalign*}
&\min\left\{\snr{\bar{\xi}_{\bar{x}}}^{2},\snr{\bar{\xi}_{\bar{y}}}^{2}\right\}\stackrel{\eqref{small12},\eqref{beta0}}{\ge} \frac{A_{1}^{2}}{16}\snr{\bar{x}-\bar{y}}^{2(\beta_{0}-1)}-2A_{1}A_{2}\snr{\bar{x}-\bar{y}}^{\beta_{0}-1}\nonumber \\
&\qquad\stackrel{\eqref{beta0}}{\ge}A_{1}\snr{\bar{x}-\bar{y}}^{-\frac{1}{2}}\left[\frac{A_{1}}{16}\snr{\bar{x}-\bar{y}}^{-\frac{1}{2}}-2A_{2}\right]\nonumber \\
&\qquad \ \ \ge A_{1}\snr{\bar{x}-\bar{y}}^{-\frac{1}{2}}\left[\frac{A_{1}}{64}\snr{\bar{x}-\bar{y}}^{-\frac{1}{2}}+\frac{3A_{1}}{64}-2A_{2}\right]\stackrel{\eqref{a1},\eqref{rr}}{\ge}\frac{A_{1}^{2}}{64}\rr_{*}^{-1}.
\end{flalign*}
The content of the previous display clearly shows that
\begin{flalign}\label{small1}
\min\left\{\snr{\bar{\xi}_{\bar{x}}},\snr{\bar{\xi}_{\bar{y}}}\right\}\ge \frac{A_{1}}{8}\rr_{*}^{-\frac{1}{2}},
\end{flalign}
thus
\begin{flalign}
\min\{\snr{\bar{\xi}+\bar{\xi}_{\bar{x}}},\snr{\bar{\xi}+\bar{\xi}_{\bar{y}}}\}\ge& \min\left\{\snr{\bar{\xi}_{\bar{x}}},\snr{\bar{\xi}_{\bar{y}}}\right\}-\snr{\bar{\xi}}\ge \min\left\{\snr{\bar{\xi}_{\bar{x}}},\snr{\bar{\xi}_{\bar{y}}}\right\}-s_{0}^{-1}\stackrel{\eqref{a1}}{\ge} 2. \label{small2}
\end{flalign}
At this stage we can derive the viscosity inequalities
\begin{flalign*}
\begin{cases}
\ \left[\snr{\bar{\xi}+\bar{\xi}_{\bar{x}}}^{p}+\bar{a}(\bar{x})\snr{\bar{\xi}+\bar{\xi}_{\bar{x}}}^{q}\right]\bar{F}(X_{\delta})\le \bar{f}(\bar{x})\\
\ \left[\snr{\bar{\xi}+\bar{\xi}_{\bar{y}}}^{p}+\bar{a}(\bar{y})\snr{\bar{\xi}+\bar{\xi}_{\bar{y}}}^{q}\right]\bar{F}(Y_{\delta})\ge \bar{f}(\bar{y})
\end{cases}
\end{flalign*}
and proceed as in \emph{Case 1} to get, with the help of \eqref{vvisco1} and \eqref{visco2},
\begin{flalign*}
\frac{\bar{f}(\bar{x})}{\left[\snr{\bar{\xi}+\bar{\xi}_{\bar{x}}}^{p}+\bar{a}(\bar{x})\snr{\bar{\xi}+\bar{\xi}_{\bar{x}}}^{q}\right]}\ge \frac{\bar{f}(\bar{y})}{\left[\snr{\bar{\xi}+\bar{\xi}_{\bar{y}}}^{p}+\bar{a}(\bar{y})\snr{\bar{\xi}+\bar{\xi}_{\bar{y}}}^{q}\right]}-2(2A_{2}+1)\left[\Lambda(n-1)+\lambda\right]+\frac{\lambda A_{1}}{2}.
\end{flalign*}
Using \eqref{a1} we see that
\begin{flalign*}
-2(2A_{2}+1)\left[\Lambda(n-1)+\lambda\right]+\frac{\lambda A_{1}}{2}>0,
\end{flalign*}
therefore we can use \eqref{small2} to get the following contradiction to \eqref{a1}:
\begin{flalign*}
1\ge 2^{p-1}\left[-2(2A_{2}+1)\left[\Lambda(n-1)+\lambda\right]+\frac{\lambda A_{1}}{2}\right].
\end{flalign*}
Combining the two previous cases, we obtain that if $\bar{u}\in C(B_{1})$ is a normalized viscosity solution of \eqref{20} and $\tilde{x}\in B_{1/2}$, then $\mathcal{L}(\tilde{x})\le 0$ for all $\tilde{x}\in B_{\rr/2}$, which means that, if $\snr{\bar{\xi}}>s_{0}^{-1}$, $\bar{u}$ is Lipschitz-continuous with
\begin{flalign*}
[\bar{u}]_{0,1;B_{\rr/2}}\le c(n,\lambda,\Lambda,p,\rr)
\end{flalign*}
or, if $\snr{\bar{\xi}}\le s_{0}^{-1}$, it is $\beta_{0}$-H\"older continuous with
\begin{flalign*}
[\bar{u}]_{0,\beta_{0};B_{\rr/2}}\le c(n,\lambda,\Lambda,p,\rr)\quad \mbox{for some} \ \ \beta_{0}\in (0,1). 
\end{flalign*}
\end{proof}
\end{proposition}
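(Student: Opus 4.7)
The plan is a doubling-variables/Ishii--Lions argument, with the two cases $|\bar\xi|>s_0^{-1}$ and $|\bar\xi|\le s_0^{-1}$ handled by choosing the concave modulus $\omega$ in \eqref{omega} so that the second-derivative term from $\omega$ creates a strongly negative eigenvalue in $X_\delta-Y_\delta$. First I would fix the constants $\rr_*$, $A_1$, $A_2$, $s_0$ as in \eqref{rr}--\eqref{s0} and argue by contradiction, assuming there is some $\tilde x\in B_{\rr/2}$ with $\mathcal{L}(\tilde x)>0$. Let $(\bar x,\bar y)$ realize the positive maximum of
\[
\phi(x,y):=\bar u(x)-\bar u(y)-A_1\omega(|x-y|)-A_2\bigl(|x-\tilde x|^2+|y-\tilde x|^2\bigr).
\]
Since $\operatorname{osc}_{B_1}\bar u\le 1$ by \eqref{sreg}${}_1$, positivity of $\phi(\bar x,\bar y)$ combined with the penalty $A_2$ chosen as in \eqref{a2} forces $\bar x,\bar y$ into the interior of $B_\rr$, and $\bar x\ne\bar y$ so that $\psi$ is smooth around $(\bar x,\bar y)$. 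This lets me apply Proposition \ref{p1}: for every small $\delta$ I obtain matrices $X_\delta,Y_\delta\in\mathcal{S}(n)$ and the gradients $\bar\xi_{\bar x},\bar\xi_{\bar y}$ belonging to a limiting sub- and super-jet of $\bar u$, together with the matrix inequality \eqref{m2}.

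From \eqref{m2} the basic extraction is two-fold: testing against diagonal vectors $(z,z)$ shows all eigenvalues of $X_\delta-Y_\delta$ are $\le 2(2A_2+1)$, and testing against $(e,-e)$ with $e=(\bar x-\bar y)/|\bar x-\bar y|$ produces the extra concave contribution $4A_1\omega''(|\bar x-\bar y|)$. By our choice of $\omega$ (strictly concave in both cases of \eqref{omega}) this second bound becomes strongly negative as soon as $A_1$ is taken as in \eqref{a1}, so at least one eigenvalue of $X_\delta-Y_\delta$ is very negative. Feeding this into the Pucci operator via \eqref{m-}${}_2$ yields a lower bound of the form
\[
\mathcal{M}^{-}_{\lambda,\Lambda}(X_\delta-Y_\delta)\ge -2(2A_2+1)\bigl[\Lambda(n-1)+\lambda\bigr]+c_0\lambda A_1,
\]
with $c_0$ coming from $\omega''$, and \eqref{a1} guarantees that this right-hand side is strictly positive.

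Now I would combine this with the two viscosity inequalities (subsolution at $\bar x$, supersolution at $\bar y$) together with $\bar F(X_\delta)\ge\bar F(Y_\delta)+\mathcal{M}^{-}_{\lambda,\Lambda}(X_\delta-Y_\delta)$ from \eqref{elll} to reach a contradiction. The main obstacle, and the reason the split $|\bar\xi|\lessgtr s_0^{-1}$ appears, is precisely the degeneracy factor $[|\bar\xi+D\bar u|^p+\bar a(x)|\bar\xi+D\bar u|^q]$: to divide it out I need a uniform lower bound for $|\bar\xi+\bar\xi_{\bar x}|$ and $|\bar\xi+\bar\xi_{\bar y}|$. In Case 1, $|\bar\xi|>s_0^{-1}$, I rescale by $s=|\bar\xi|^{-1}$ (as done in \eqref{104}) so that the shift becomes unit size and the doubling-gradient contributions are a small perturbation, giving $\min\{|\tilde\xi+s\bar\xi_{\bar x}|,|\tilde\xi+s\bar\xi_{\bar y}|\}\ge 1/2$ via \eqref{s0}; note that here I choose the almost-Lipschitz $\omega(t)=t-\omega_0 t^{3/2}$ so that $\omega''\approx-\tfrac34\omega_0 t^{-1/2}$ stays bounded below by a fixed negative constant. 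In Case 2, $|\bar\xi|\le s_0^{-1}$, I instead use that with $\omega(t)=t^{\beta_0}$, the doubling contribution $A_1\omega'(|\bar x-\bar y|)$ is very large near $0$, forcing $\min\{|\bar\xi_{\bar x}|,|\bar\xi_{\bar y}|\}\gtrsim A_1\rr_*^{-1/2}$ and hence $\min\{|\bar\xi+\bar\xi_{\bar x}|,|\bar\xi+\bar\xi_{\bar y}|\}\ge 2$, while the concavity of $t\mapsto t^{\beta_0}$ for $\beta_0\in(1/4,1/2)$ still delivers $\omega''\le -\beta_0(1-\beta_0)t^{\beta_0-2}\le-\beta_0(1-\beta_0)$ on $[0,1]$. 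In both cases the bounded factor $[|\cdot|^p+\bar a(\cdot)|\cdot|^q]$ on the left-hand sides can be absorbed, $\|\bar f\|_\infty\le\varepsilon$ is harmless by \eqref{sreg}${}_2$, and the resulting inequality collides with the choice of $A_1$ in \eqref{a1}. Thus $\mathcal{L}(\tilde x)\le 0$ for every $\tilde x\in B_{\rr/2}$, which immediately gives the Lipschitz estimate in Case 1 and the $\beta_0$-Hölder estimate in Case 2, with constants depending only on $(n,\lambda,\Lambda,p,\rr)$; Theorem \ref{mor} follows at once by taking $\tilde x$ arbitrary in $B_{\rr/2}$ and using the arbitrariness of $\rr\in(0,\rr_*)$.
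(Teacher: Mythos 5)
Your proposal is correct and follows essentially the same route as the paper: the same contradiction setup with the penalized doubling function, the same choice of moduli $\omega$ in the two regimes, the Ishii--Lions matrix inequality tested on $(z,z)$ and on $(e,-e)$ to produce one very negative eigenvalue, the Pucci lower bound, and the case split $\snr{\bar{\xi}}\gtrless s_{0}^{-1}$ handled exactly as in the paper (rescaling by $s=\snr{\bar{\xi}}^{-1}$ in the first case, using the largeness of $A_{1}\omega'$ to get $\min\{\snr{\bar{\xi}+\bar{\xi}_{\bar{x}}},\snr{\bar{\xi}+\bar{\xi}_{\bar{y}}}\}\ge 2$ in the second), ending in the same collision with \eqref{a1}. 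Only a small wording slip (in Case 1 the point is that $\omega''\le-\tfrac34\omega_{0}$, i.e.\ bounded \emph{above} by a fixed negative constant), which does not affect the argument.
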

Notice that the exponent $\beta_{0}$ in the proof of Proposition \ref{ex} does not depend on $\rr$, therefore, if $\bar{u}\in C(B_{1})$ is a normalized viscosity solution of \eqref{20}, regardless to the magnitude of $\snr{\bar{\xi}}$, we can use a standard covering argument to deduce that $\bar{u}\in C^{0,\beta_{0}}_{loc}(B_{1})$ and for any $B_{\rr}\subset B_{1}$ there holds that
\begin{flalign*}
[\bar{u}]_{0,\beta_{0};B_{\rr}}\le c(n,\lambda,\Lambda,p,\rr).
\end{flalign*}
The proof of Theorem \ref{mor} is now complete.
\begin{remark}
\emph{The definitions of $A_{2}$ and $A_{1}$ fix the dependency: $s_{0}=s_{0}(n,\lambda,\Lambda,p,\rr)$. Having a proper look to \eqref{a1} we notice also the presence of $\omega_{0}$, but this really does not matter, since we can set it equal to $1/6$ and we are out of troubles}.
\end{remark}
\section{$C^{1,\gamma}$-local regularity}\label{locreg} We open this section with a $\bar{F}$-harmonic approximation result, which essentially states that, under suitable smallness assumptions, a normalized viscosity solution of problem \eqref{20} in $B_{1}$ can be approximated by a linear function on a smaller ball up to an error which can be controlled via the radius of the ball. 
\begin{lemma}\label{har}
Under assumptions \eqref{ell} and \eqref{assf}-\eqref{assff}, let $\sigma>0$ be as in \eqref{delta} and $\bar{u}\in C(B_{1})$ be a viscosity solution of
\begin{flalign*}
\left[\snr{\bar{\xi}+D\bar{u}}^{p}+\bar{a}(x)\snr{\bar{\xi}+D\bar{u}}^{q}\right]\bar{F}(D^{2}\bar{u})=\bar{f}(x) \ \ \mbox{in} \ \ B_{1},
\end{flalign*}
with $\bar{\xi} \in \mathbb{R}^{n}$ arbitrary. There exists a positive $\iota=\iota(\sigma,n,\lambda,\Lambda,p,q)$ such that if
\begin{flalign*}
\nr{\bar{f}}_{L^{\infty}(B_{1})}\le\iota,
\end{flalign*}
then we can find $\tilde{\xi}_{\sigma}\in \mathbb{R}^{n}$ such that
\begin{flalign*}
\osc_{B_{\sigma}}\left(\bar{u}-\tilde{\xi}_{\sigma}\cdot x\right)\le \frac{\sigma}{2}.
\end{flalign*}
\end{lemma}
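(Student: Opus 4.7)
The plan is a compactness-and-contradiction argument in the spirit of Caffarelli's $F$-harmonic approximation. Suppose the statement fails: then there exist $\sigma$ as in \eqref{delta} and a sequence of admissible data $(\bar{F}_k, \bar{a}_k, \bar{\xi}_k, \bar{f}_k)$ with $\|\bar{f}_k\|_{L^\infty(B_1)} \to 0$, together with normalized viscosity solutions $\bar{u}_k \in C(B_1)$ of the corresponding shifted equation, for which no affine function $\tilde{\xi}\cdot x$ achieves $\osc_{B_\sigma}(\bar{u}_k - \tilde{\xi}\cdot x) \le \sigma/2$.

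First I extract limits. By Theorem \ref{mor}, $\{\bar{u}_k\}$ is uniformly $\beta_0$-Hölder on compact subsets of $B_1$, so Arzelà-Ascoli gives $\bar{u}_k \to \bar{u}_\infty$ locally uniformly along a subsequence. The $(\lambda,\Lambda)$-ellipticity of $\bar{F}_k$ together with $\bar{F}_k(0)=0$ forces $|\bar{F}_k(M)| \le n\Lambda|M|$ and a uniform Lipschitz bound on $\mathcal{S}(n)$; a further extraction yields $\bar{F}_k \to \bar{F}_\infty$ locally uniformly, with $\bar{F}_\infty$ still $(\lambda,\Lambda)$-elliptic. If $|\bar{\xi}_k| \to \infty$ I first apply the rescaling $s_k = |\bar{\xi}_k|^{-1}$ of Case 1 in Proposition \ref{ex} to reduce to the bounded case; otherwise I extract $\bar{\xi}_k \to \bar{\xi}_\infty \in \mathbb{R}^n$. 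The coefficients $\bar{a}_k$ are treated by pointwise subsequential selection on compacta, which is all that is needed below.

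The central step is to verify that $\bar{u}_\infty$ solves $\bar{F}_\infty(D^2 \bar{u}_\infty) = 0$ in $B_1$ in the viscosity sense; I sketch the subsolution direction, the other being symmetric. Let $\varphi \in C^2$ touch $\bar{u}_\infty$ strictly from above at an interior point $x_0$. By uniform convergence, $\bar{u}_k - \varphi$ attains a local maximum at some $x_k \to x_0$, so
\[
\bigl[|\bar{\xi}_k + D\varphi(x_k)|^{p} + \bar{a}_k(x_k)|\bar{\xi}_k + D\varphi(x_k)|^{q}\bigr]\,\bar{F}_k(D^2 \varphi(x_k)) \le \bar{f}_k(x_k).
\]
When $|\bar{\xi}_\infty + D\varphi(x_0)| > 0$, the prefactor is uniformly bounded below, so dividing and sending $k \to \infty$ gives $\bar{F}_\infty(D^2 \varphi(x_0)) \le 0$. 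The delicate case is $D\varphi(x_0) = -\bar{\xi}_\infty$: there I perturb via $\varphi_\eta(x) := \varphi(x) + \eta\cdot(x - x_0)$ for small $\eta \ne 0$. Strictness of the maximum for $\bar{u}_\infty - \varphi$ is inherited by $\bar{u}_\infty - \varphi_\eta$ at some $x_\eta \to x_0$, and likewise $\bar{u}_k - \varphi_\eta$ peaks at $x_{k,\eta} \to x_\eta$. Now the shifted gradient tends to $\eta \ne 0$, so the previous division argument applies and yields $\bar{F}_\infty(D^2 \varphi(x_\eta)) \le 0$; letting $\eta \to 0$ and invoking continuity of $\bar{F}_\infty$ and $D^2 \varphi$ gives $\bar{F}_\infty(D^2 \varphi(x_0)) \le 0$.

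With $\bar{u}_\infty$ shown to be $\bar{F}_\infty$-harmonic, \eqref{0331} produces $\tilde{\xi}_\sigma \in \mathbb{R}^n$ with $\osc_{B_\sigma}(\bar{u}_\infty - \tilde{\xi}_\sigma \cdot x) \le \sigma/4$. Combining with uniform convergence,
\[
\osc_{B_\sigma}(\bar{u}_k - \tilde{\xi}_\sigma \cdot x) \le \sigma/4 + 2\|\bar{u}_k - \bar{u}_\infty\|_{L^\infty(\overline{B}_\sigma)} < \sigma/2
\]
for all large $k$, contradicting the failure hypothesis. The principal obstacle is precisely the stability analysis at the degenerate locus where $\bar{\xi}_\infty + D\varphi(x_0)$ vanishes and the prefactor in the equation collapses; one cannot divide through directly, and the gradient perturbation above is the standard device for circumventing this. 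Secondary, routine points are the compactness of $\{\bar{F}_k\}$, which is automatic from $(\lambda,\Lambda)$-ellipticity, and the handling of $|\bar{\xi}_k| \to \infty$ by the rescaling of Proposition \ref{ex}.
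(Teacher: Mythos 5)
Your overall architecture coincides with the paper's: contradiction, compactness of the solutions via Theorem \ref{mor} and Arzel\`a--Ascoli, compactness of the operators from uniform $(\lambda,\Lambda)$-ellipticity, identification of the limit as an $\bar{F}_{*}$-harmonic function, and the final contradiction through \eqref{0331}. The gap is in the one step that carries all the difficulty: the passage to the limit when the shifted gradient degenerates, i.e.\ $\bar{\xi}_{\infty}+D\varphi(x_{0})=0$. Your fix is to add a linear term $\eta\cdot(x-x_{0})$ and assert that ``the shifted gradient tends to $\eta\neq 0$''. This is false in general: at the new contact point $x_{\eta}$ the relevant vector is $\bar{\xi}_{\infty}+D\varphi(x_{\eta})+\eta = D^{2}\varphi(x_{0})(x_{\eta}-x_{0})+\eta+o(\snr{x_{\eta}-x_{0}})$, and the drift of $D\varphi$ caused by the motion of the contact point can cancel $\eta$ exactly. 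Concretely, if near $x_{0}$ one has $\bar{u}_{\infty}-\varphi=-\tfrac12\snr{x-x_{0}}^{2}$ and $\eta$ is an eigenvector of $M=D^{2}\varphi(x_{0})$ with eigenvalue $1$, then the perturbed maximum sits at $x_{\eta}=x_{0}-\eta$ and the shifted gradient there is $(\mathrm{Id}-M)\eta=0$: the contact is degenerate again, and your division argument never gets off the ground. Nothing in your proposal rules out such configurations or explains how to choose $\eta$ to avoid them, and doing so would require an analysis of the structure of contact points that is essentially the missing core of the proof (your ``letting $\eta\to0$'' step also silently assumes the auxiliary maxima can be made strict so that they transfer to $\bar{u}_{k}$, a minor but real omission).

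This is precisely the point where the paper departs from a naive stability argument and follows the Imbert--Silvestre device: assuming $\bar{F}_{*}(M)<0$, ellipticity forces $M$ to have a positive eigenvalue, and one perturbs the quadratic test function by the nonsmooth convex term $\kappa\snr{\Pi_{\Sigma}(x)}$, where $\Sigma$ is the span of the eigenspaces of the non-negative eigenvalues of $M$. The cone's slope has fixed size $\kappa$ and, through the case analysis ($\Pi_{\Sigma}(\tilde{x}_{j})=0$ or not, $\snr{Mx_{*}}=0$ or $>0$, $\Sigma=\mathbb{R}^{n}$ or not), one always secures a quantitative lower bound on the shifted gradient at the contact points of the approximating solutions, which a linear perturbation cannot guarantee; the extra Hessian contribution $D^{2}\snr{\Pi_{\Sigma}}\ge 0$ is then absorbed by ellipticity. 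As a secondary point, the case $\snr{\bar{\xi}_{k}}\to\infty$ does not need the rescaling of Proposition \ref{ex}: since then $\snr{\bar{\xi}_{k}+D\varphi(x_{k})}\ge\tfrac12\snr{\bar{\xi}_{k}}\to\infty$, the prefactor blows up and the division is harmless, which is how the paper treats it. Without an argument replacing the linear perturbation at the degenerate locus, the proof is incomplete.
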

\begin{proof}
By contradiction we can find sequences of fully nonlinear operators
\begin{flalign}
\bar{F}_{j}\in C(\mathcal{S}(n),\mathbb{R}) \ \ \mbox{uniformly} \ \ (\lambda,\Lambda)\mbox{-elliptic};\label{01}
\end{flalign}
of vectors $\{\xi_{j}\}_{j \in \N}\subset \mathbb{R}^{n}$ and of functions 
\begin{flalign}\label{02}
&\{\bar{a}_{j}\}_{j \in \N}\subset C(B_{1}) \ \ \mbox{such that} \ \ \bar{a}_{j}(x)\ge 0 \ \ \mbox{for all} \ \ x\in B_{1},\\
&\{\bar{f}_{j}\}_{j \in \N}\in C(B_{1}) \ \ \mbox{with} \ \ \nr{\bar{f}}_{L^{\infty}(B_{1})}\le j^{-1},\label{03}\\
&\{\bar{u}_{j}\}_{j \in \N}\subset C(B_{1})\ \ \mbox{so that} \ \ \sup_{j \in \N}\osc_{B_{1}}(\bar{u}_{j})\le 1 \ \ \mbox{and} \ \ \bar{u}_{j}(0)=0.\label{04}
\end{flalign}
Moreover, $\bar{u}_{j}$ solves
\begin{flalign}\label{05}
\left[\snr{\xi_{j}+D\bar{u}_{j}}^{p}+\bar{a}_{j}(x)\snr{\xi_{j}+D\bar{u}_{j}}^{q}\right]\bar{F}_{j}(D^{2}\bar{u}_{j})=\bar{f}_{j}(x) \ \ \mbox{in} \ \ B_{1},
\end{flalign}
but
\begin{flalign}\label{06}
\osc_{B_{\sigma}}\left(\bar{u}_{j}-\tilde{\xi}\cdot x\right)\ge \frac{\sigma}{2} \quad \mbox{for all} \ \tilde{\xi}\in \mathbb{R}^{n}.
\end{flalign}
The uniformity prescribed by \eqref{01} with respect to assumption \eqref{ell} assures that
\begin{flalign}
\ \{\bar{F}_{j}\}_{j \in \N} \ \ \mbox{converges to a} \ \ (\lambda,\Lambda)\mbox{-elliptic operator} \ \ \bar{F}_{*}\in C(\mathcal{S}(n),\mathbb{R})\label{012}
\end{flalign}
and, by Theorem \ref{mor}, $\bar{u}_{j}\in C^{0,\beta_{0}}_{loc}(B_{1})\cap C(B_{1})$ for some $\beta_{0} \in (0,1)$ thus, using \eqref{mor1} and Arzela-Ascoli theorem we have that
\begin{flalign}\label{07}
\bar{u}_{j}\to \bar{u}_{*} \ \ \mbox{locally uniformly in} \ \ B_{1}. 
\end{flalign}
In particular, from $\eqref{04}_{2}$ and \eqref{07} there holds that
\begin{flalign}\label{08}
\bar{u}_{*}\in C(B_{1}) \ \ \mbox{and} \ \ \osc_{B_{1}}{\bar{u}_{*}}\le 1,
\end{flalign}
but
\begin{flalign}\label{cont}
\osc_{B_{\sigma}}(\bar{u}_{*}-\tilde{\xi}\cdot x)\ge \frac{\sigma}{2}\quad \mbox{for all} \ \ \tilde{\xi}\in \mathbb{R}^{n}.
\end{flalign}
Let us show that $\bar{u}_{*}$ is a viscosity solution of equation
\begin{flalign}\label{homo}
\bar{F}(D^{2}\bar{u}_{*})=0\quad \mbox{in} \ \ B_{1}.
\end{flalign}
To do so, we show that $\bar{u}_{*}$ is a supersolution of \eqref{homo}, then, in a specular way, it can be shown that it is also a subsolution to the same equation, thus concluding the proof. Let $\varphi$ be any test function touching $\bar{u}_{*}$ strictly from below in $\tilde{x}\in B_{1}$. For simplicity, we take $\tilde{x}=0$ thus, by $\eqref{04}_{3}$, $\varphi(0)=\bar{u}_{*}(0)=0$ (recall the comment made at the end of Section \ref{small}) and that $\varphi(x)<\bar{u}_{*}(x)$ for all $x\in B_{\rr}\setminus \{0\}$ for $\rr>0$ sufficiently small. There is no loss of generality in assuming that $\varphi$ is a quadratic polynomial, i.e., 
$$
\varphi(x):=\frac{1}{2}Mx\cdot x+b\cdot x.
$$
In view of \eqref{07}, we see that the polynomial
\begin{flalign*}
\varphi_{j}(x):=\frac{1}{2}M(x-x_{j})\cdot(x-x_{j})+b\cdot(x-x_{j})+\bar{u}_{j}(x_{j})
\end{flalign*}
touches $\bar{u}_{j}$ from below in $x_{j}$ belonging to a small neighborhood of zero. By \eqref{05} we immediately deduce that
\begin{flalign}\label{har2}
\left[\snr{\xi_{j}+b}^{p}+\bar{a}_{j}(x_{j})\snr{\xi_{j}+b}^{q}\right]\bar{F}_{j}(M)\ge \bar{f}_{j}(x_{j}).
\end{flalign}
For the sake of simplicity, from now on we shall distinguish various cases which will eventually lead to the final contradiction.\\\\
\emph{Case 1: sequence $\{\xi_{j}\}_{j\in \mathbb{N}}\subset \mathbb{R}^{n}$ is unbounded.} If the sequence $\{\xi_{j}\}_{j\in \mathbb{N}}$ is unbounded, then we can find a (non relabelled) subsequence such that $\snr{\xi_{j}}\to_{j\to \infty} \infty$ and, choosing $j\in \N$ so large that $\snr{\xi_{j}}\ge \max\left\{1,2\snr{b}\right\}$, by triangular inequality we also have that
\begin{flalign}\label{ass}
\snr{\xi_{j}+b}\ge \snr{\xi_{j}}-\snr{b}\ge \frac{1}{2}\snr{\xi_{j}}\to_{j\to \infty}0,
\end{flalign}
therefore we bound
\begin{flalign}
\left| \ \frac{\bar{f}_{j}(x_{j})}{\left[\snr{\xi_{j}+b}^{p}+\bar{a}_{j}(x_{j})\snr{\xi_{j}+b}^{q}\right]} \ \right |\stackrel{\eqref{03}}{\le}\frac{2^{p}}{j\snr{\xi_{j}}^{p}}\to_{j\to \infty}0.\label{har1}
\end{flalign}
Merging \eqref{har2} and \eqref{har1} we get
\begin{flalign*}
\bar{F}_{*}(M)=\lim_{j\to \infty}\bar{F}_{j}(M)\ge- \lim_{j\to \infty}\left| \ \frac{\bar{f}_{j}(x_{j})}{\left[\snr{\xi_{j}+b}^{p}+\bar{a}_{j}(x_{j})\snr{\xi_{j}+b}^{q}\right]} \ \right |\ge -\lim_{j\to \infty}\frac{2^{p}}{j\snr{\xi_{j}}^{p}}=0,
\end{flalign*}
thus $\bar{F}_{*}(M)\ge 0$.\\\\
\emph{Case 2: sequence $\{\xi_{j}\}_{j\in \N}\subset \mathbb{R}^{n}$ is bounded.} Now we look at the case in which $\{\xi_{j}\}_{j\in \N}$ is bounded. Thus we can extract a (non relabelled) subsequence $\xi_{j}\to_{j\to \infty}\xi_{*}$. As a consequence, $(\xi_{j}+b)\to_{j\to \infty}\xi_{*}+b$. If $\snr{\xi_{*}+b}>0$, then we can find a $\bar{j}\in \N$ so large that 
\begin{flalign*}
\snr{\xi_{j}+b}\ge \frac{1}{2}\snr{\xi_{*}+b}>0\quad \mbox{for all} \ \ j>\bar{j},
\end{flalign*}
thus
\begin{flalign*}
\bar{F}_{*}(M)=\lim_{j\to \infty}\bar{F}_{j}(M)\ge -\lim_{j\to \infty}\left| \ \frac{\bar{f}_{j}(x_{j})}{\left[\snr{\xi_{j}+b}^{p}+\bar{a}(x_{j})\snr{\xi_{j}+b}^{q}\right]} \ \right|\ge -\lim_{j\to \infty}\frac{2^{p}}{j\snr{\xi_{*}+b}^{p}}=0.
\end{flalign*}
Finally, we look at the case $\snr{\xi_{*}+b}=0$. By contradiction, let us assume that
\begin{flalign}\label{contra1}
\bar{F}_{*}(M)<0.
\end{flalign}
By ellipticity, this means that $M$ has at least one positive eigenvalue. Let $\Sigma$ be the direct sum of all the eigensubspaces corresponding to non-negative eigenvalues of $M$ and $\Pi_{\Sigma}$ be the orthogonal projection over $\Sigma$. Since $\snr{\xi_{*}+b}=0$, two situations can occur: $\xi_{*}=-b$ with $\snr{\xi_{*}},\snr{b}>0$ or $\snr{\xi_{*}}=\snr{b}=0$.\\\\
\emph{Case 2.1: $\xi_{*}=-b$ with $\snr{\xi_{*}},\snr{b}>0$.} Since $\varphi$ touches $\bar{u}_{*}$ in zero from below, then, for $\kappa>0$ sufficiently small, by \eqref{07} the map
\begin{flalign}\label{pi2}
\phi_{\kappa}(x):=\frac{1}{2}Mx\cdot x+b\cdot x+\kappa \snr{\Pi_{\Sigma}(x)}
\end{flalign}
touches $\bar{u}_{j}$ in a point $\tilde{x}_{j}$ belonging to a neighborhood of zero. Given that $\sup_{j\in \N}\snr{\tilde{x}_{j}}\le 1$, up to (non relabelled) subsequences, we can assume that $\tilde{x}_{j}\to_{j\to \infty}x_{*}$ for some $x_{*}\in B_{1}$. At this point we examine two scenarios: $\Pi_{\Sigma}(\tilde{x}_{j})=0$ and $\Pi_{\Sigma}(\tilde{x}_{j})\not =0$. If $\Pi_{\Sigma}(\tilde{x}_{j})=0$, then
\begin{flalign}\label{pi1}
\snr{\Pi_{\Sigma}(\tilde{x}_{j})}=\max_{e\in \mathbb{S}^{n-1}}e\cdot \Pi_{\Sigma}(\tilde{x}_{j})=\min_{e\in \mathbb{S}^{n-1}}e\cdot \Pi_{\Sigma}(\tilde{x}_{j}),
\end{flalign}
so, in the light of \eqref{pi1}, the map $\phi_{\kappa}$ can be rewritten as
\begin{flalign}\label{pi3}
\phi_{\kappa}(x)=\frac{1}{2}Mx\cdot x+b\cdot x+\kappa e\cdot \Pi_{\Sigma}(x)\quad \mbox{for all} \ \ e\in \mathbb{S}^{n-1}. 
\end{flalign}
A straightforward computation shows that
\begin{flalign*}
D(e\cdot \Pi_{\Sigma}(x))=\Pi_{\Sigma}(e)\quad \mbox{and}\quad D^{2}(e\cdot \Pi_{\Sigma}(x))=0.
\end{flalign*}
Notice that 
\begin{flalign}\label{pi4}
e\in \mathbb{S}^{n-1}\cap \Sigma\Rightarrow \Pi_{\Sigma}(e)=e\quad \mbox{and}\quad e\in \mathbb{S}^{n-1}\cap \Sigma^{\perp}\Rightarrow \Pi_{\Sigma}(e)=0,
\end{flalign}
where we denoted with $\Sigma^{\perp}$ the orthogonal subspace to $\Sigma$, so if $\snr{Mx_{*}}=0$, we can fix $\bar{j}\in \N$ so large that $\snr{M\tilde{x}_{j}}+\snr{b+\xi_{j}}\le \frac{\kappa}{2}$ for all $j>\bar{j}$, thus
\begin{flalign*}
\snr{M\tilde{x}_{j}+b+\xi_{j}+\kappa e}\ge \frac{\kappa}{2}.
\end{flalign*}
It is easy to see that
\begin{flalign*}
\frac{1}{\left[\snr{M\tilde{x}_{j}+b+\xi_{j}+\kappa e}^{p}+\bar{a}_{j}(\tilde{x}_{j})\snr{M\tilde{x}_{j}+b+\xi_{j}+\kappa e}^{q}\right]}\le \frac{1}{\snr{M\tilde{x}_{j}+b+\xi_{j}+\kappa e}^{p}}\le \frac{2^{p}}{\kappa^{p}},
\end{flalign*}
thus, looking at the viscosity inequality and exploiting the content of the previous display, we have
\begin{flalign}\label{har7}
\bar{F}_{*}(M)=&\lim_{j\to \infty}\bar{F}_{j}(M)\ge\frac{\bar{f}_{j}(\tilde{x}_{j})}{\left[\snr{M\tilde{x}_{j}+b+\xi_{j}+\kappa e}^{p}+\bar{a}_{j}(\tilde{x}_{j})\snr{M\tilde{x}_{j}+b+\xi_{j}+\kappa e}^{q}\right]}\nonumber \\
\ge&-\lim_{j\to \infty}\left| \ \frac{\bar{f}_{j}(\tilde{x}_{j})}{\left[\snr{M\tilde{x}_{j}+b+\xi_{j}+\kappa e}^{p}+\bar{a}_{j}(\tilde{x}_{j})\snr{M\tilde{x}_{j}+b+\xi_{j}+\kappa e}^{q}\right]}\ \right|\nonumber \\
\ge &-\lim_{j\to \infty}\frac{2^{p}}{j\kappa^{p}}=0,
\end{flalign}
so $\tilde{F}_{*}(M)\ge 0$, which contradicts \eqref{contra1}. On the other hand, if $\snr{Mx_{*}}>0$, we first consider the case in which $\Sigma\equiv \mathbb{R}^{n}$ and select $e\in \mathbb{S}^{n-1}$ so that 
\begin{flalign*}
\snr{Mx_{*}+\kappa\Pi_{\Sigma}(e)}\stackrel{\eqref{pi4}_{1}}{=}\snr{Mx_{*}+\kappa e}>0.
\end{flalign*}
We fix $\bar{j}\in \mathbb{N}$ sufficiently large that for $j>\bar{j}$ there holds that 
\begin{flalign}\label{pi5}
\snr{M\tilde{x}_{j}+\kappa e}\ge \frac{1}{2}\snr{Mx_{*}+\kappa e}>0\quad \mbox{and}\quad \snr{\xi_{j}+b}\le \frac{1}{4}\snr{Mx_{*}+\kappa e}.
\end{flalign}
Furthermore, $\Sigma \not \equiv \mathbb{R}^{n}$ means that there exists $e\in \mathbb{S}^{n-1}\cap \Sigma^{\perp}$, thus 
\begin{flalign*}
\snr{M\tilde{x}_{*}+\kappa \Pi_{\Sigma}(e)}\stackrel{\eqref{pi4}_{2}}{=}\snr{Mx_{*}}>0,
\end{flalign*}
so we can fix $\bar{j}\in \mathbb{N}$ so large that
\begin{flalign}\label{pi6}
\snr{M\tilde{x}_{j}}\ge\frac{1}{2}\snr{Mx_{*}}\quad \mbox{and}\quad \snr{b+\xi_{j}}\le \frac{1}{4}\snr{Mx_{*}}.
\end{flalign}
Using either \eqref{pi5} or \eqref{pi6}, we get that
\begin{flalign*}
\snr{M\tilde{x}_{j}+b+\xi_{j}+\kappa\Pi_{\Sigma}(e)}\ge \frac{1}{4}\snr{Mx_{*}+\kappa\Pi_{\Sigma}(e)}>0.
\end{flalign*}
We then compute
\begin{flalign*}
&\frac{1}{\left[\snr{M\tilde{x}_{j}+b+\xi_{j}+\kappa\Pi_{\Sigma}(e)}^{p}+\bar{a}_{j}(\tilde{x}_{j})\snr{M\tilde{x}_{j}+b+\xi_{j}+\kappa \Pi_{\Sigma}(e)}^{q}\right]}\le \frac{2^{2p}}{\snr{Mx_{*}+\kappa \Pi_{\Sigma}(e)}^{p}},
\end{flalign*}
so
\begin{flalign}\label{pi7}
\bar{F}_{*}(M)=&\lim_{j\to \infty}\bar{F}_{j}(M)\ge \lim_{j\to \infty}\frac{\bar{f}_{j}(\tilde{x}_{j})}{\left[\snr{M\tilde{x}_{j}+b+\xi_{j}+\kappa\Pi_{\Sigma}( e)}^{p}+\bar{a}_{j}(\tilde{x}_{j})\snr{M\tilde{x}_{j}+b+\xi_{j}+\kappa \Pi_{\Sigma}(e)}^{q}\right]}\nonumber \\
\ge &-\lim_{j\to \infty}\left| \ \frac{\bar{f}_{j}(\tilde{x}_{j})}{\left[\snr{M\tilde{x}_{j}+b+\xi_{j}+\kappa\Pi_{\Sigma}(e)}^{p}+\bar{a}_{j}(\tilde{x}_{j})\snr{M\tilde{x}_{j}+b+\xi_{j}+\kappa\Pi_{\Sigma}(e)}^{q}\right]} \ \right |\nonumber \\
\ge &-\lim_{j\to \infty}\frac{2^{2p}}{j\snr{Mx_{*}+\kappa\Pi_{\Sigma}(e)}}=0,
\end{flalign}
and we reach again a contradiction to \eqref{contra1}. Now let us consider the occurrence $\snr{\Pi_{\Sigma}(x_{*})}>0$. Choosing $j\in \N$ sufficiently large, we see that $\snr{\Pi_{\Sigma}(\tilde{x}_{j})}>0$ as well, thus the map $x\mapsto \snr{\Pi_{\Sigma}(x)}$ is smooth and convex in a neighborhood of $\tilde{x}_{j}$. Being $\Pi_{\Sigma}$ a projection map, there holds
\begin{flalign}\label{har8}
\Pi_{\Sigma}(x)D(\Pi_{\Sigma}(x))=\Pi_{\Sigma}(x)\quad \mbox{and}\quad D^{2}(\snr{\Pi_{\Sigma}(x)})\ \ \mbox{is non-negative definite},
\end{flalign}
so the variational inequality for $\phi_{\kappa}$ in its original form \eqref{pi2} reads as
\begin{flalign*}
\bar{F}_{j}(M+(D^{2}\snr{\Pi_{\Sigma}})(\tilde{x}_{j}))\ge \frac{\bar{f}_{j}(\tilde{x}_{j})}{\left[\left |M\tilde{x}_{j}+b+\xi_{j}+\kappa\frac{\Pi_{\Sigma}(\tilde{x}_{j})}{\snr{\Pi_{\Sigma}(\tilde{x}_{j})}}\right |^{p}+\bar{a}_{j}(\tilde{x}_{j})\left |M\tilde{x}_{j}+b+\xi_{j}+\kappa\frac{\Pi_{\Sigma}(\tilde{x}_{j})}{\snr{\Pi_{\Sigma}(\tilde{x}_{j})}}\right|^{q}\right]}.
\end{flalign*}
We repeat the same procedure outlined before with $e\equiv e_{j}:=\frac{\Pi_{\Sigma}(\tilde{x}_{j})}{\snr{\Pi_{\Sigma}(\tilde{x}_{j})}}$, thus getting when $\snr{Mx_{*}}=0$,
\begin{flalign}\label{har10}
\bar{F}_{j}(M+(D^{2}\snr{\Pi_{\Sigma}})(\tilde{x}_{j})))\ge -\frac{2^{p}}{j\kappa^{p}}
\end{flalign}
and, for $\snr{M_{x_{*}}}>0$,
\begin{flalign}\label{har11}
\bar{F}_{j}(M+(D^{2}\snr{\Pi_{\Sigma}})(\tilde{x}_{j}))\ge -\frac{2^{2p}}{j\snr{Mx_{*}+\kappa \Pi_{\Sigma}(e)}^{p}}.
\end{flalign}
Passing to the limit in \eqref{har10}-\eqref{har11}, we can conclude that 
\begin{flalign*}
\bar{F}_{*}(M+(D^{2}\snr{\Pi_{\Sigma}})(x_{*}))\ge 0,
\end{flalign*}
therefore, by $\eqref{har8}_{2}$, \eqref{012} and \eqref{ell} we get
\begin{flalign*}
\bar{F}_{*}(M)\ge \bar{F}_{*}(M+(D^{2}\snr{\Pi_{\Sigma}})(x_{*}))\ge 0,
\end{flalign*}
thus contradicting \eqref{contra1}.\\\\
\emph{Case 2.2: $\snr{\xi_{*}}=\snr{b}=0$}. The procedure we are going to follow here is analogous to the one employed for \emph{Case 2.1}, so we will just sketch it. Since $\varphi$ touches $\bar{u}_{*}$ from below in zero, the map
\begin{flalign*}
\phi_{\kappa}(x):=\frac{1}{2}Mx\cdot x+\kappa\snr{\Pi_{\Sigma}(x)}
\end{flalign*}
touches $\bar{u}_{j}$ from below in a point $\tilde{x}_{j}$ belonging to a small neighborhood of zero. Owing to the uniform boundedness of the moduli of the $\tilde{x}_{j}$'s, up to choosing a non-relabelled subsequence, there holds that $\tilde{x}_{j}\to_{j\to \infty}x_{*}$. If $\snr{\Pi_{\Sigma}(x_{*})}=0$, then we see that $\phi_{\kappa}$ can be rewritten as in \eqref{pi3} (with $b\equiv 0$) and it touches $\bar{u}_{j}$ from below in $\tilde{x}_{j}$ for any choice of $e\in \mathbb{S}^{n-1}$. If $\snr{Mx_{*}}=0$, we can fix $\bar{j}\in \N$ so large that $\snr{M\tilde{x}_{j}}+\snr{\xi_{j}}<\frac{\kappa}{2}$, thus, taking any $e\in \mathbb{S}^{n-1}\cap \Sigma$ we have that $\snr{M\tilde{x}_{j}+\xi_{j}+\kappa e}\ge \frac{\kappa}{2}$ and \eqref{har7} follows together with the contradiction to \eqref{contra1}. On the other hand, for $\snr{Mx_{*}}>0$ we can fix $\bar{j}\in \mathbb{N}$ large enough so either \eqref{pi5} or \eqref{pi6} is satisfied with $b=0$ (depending on whether $\Sigma\equiv \mathbb{R}^{n}$ or $\Sigma^{\perp}\not =\{0\}$), so we can safely recover \eqref{pi7} and a contradiction to \eqref{contra1}. Finally, if $\snr{\Pi_{\Sigma}(x_{*})}>0$, we apply our previous construction with $b=0$ to obtain \eqref{har10}-\eqref{har11}, pass to the limit as $j\to \infty$ and finally contradict \eqref{contra1}. \\\\
Merging \emph{Case 1} and \emph{Case 2} we can conclude that $\bar{F}_{*}(M)\ge 0$, so $\bar{u}_{*}$ is a supersolution of \eqref{homo} in $B_{1}$. For the case of subsolutions, we only need to point out that showing that $\bar{u}_{*}$ is a subsolution of equation \eqref{homo} is equivalent to prove that $\tilde{u}_{*}:=-\bar{u}_{*}$ is a supersolution of equation
$$
\tilde{F}_{*}(D^{2}w)=0\quad \mbox{in} \ \ B_{1},
$$
where we set $\tilde{F}_{*}(M):=-\bar{F}_{*}(-M)$, $M\in \mathcal{S}(n)$, which is elliptic in the sense of \eqref{ell}. Hence, we can apply all the previous machinery on $\tilde{u}_{*}$ and conclude that $\bar{u}_{*}$ is a viscosity solution of \eqref{homo}. Proposition \ref{rhar} then applies and $\bar{u}_{*}\in C^{1,\alpha}(B_{1/2})$. In particular, \eqref{0331}, which contradicts \eqref{cont} is in force and the proof is complete.
\end{proof}
\begin{remark}\label{remd}
\emph{In the statement of Lemma \ref{har}, $\iota=\iota(\sigma, n,\lambda,\Lambda,p,q)$, but since \eqref{delta} prescribes that $\sigma=\sigma(n,\lambda,\Lambda)$, we can simply say that $\iota=\iota(n,\lambda,\Lambda,p,q)$.}
\end{remark}
\subsection{Proof of Theorem \ref{reg}} Lemma \ref{har} builds a tangential path connecting normalized viscosity solutions to problem \eqref{20} to normalized viscosity solutions of the limiting profile, for which the Krylov-Safonov regularity theory is available. The core of the proof of Theorem \ref{reg} will be transferring such regularity to normalized viscosity solutions of \eqref{20}. This is the content of the next lemma. 
\begin{lemma}\label{iter} There are $\sigma=\sigma(n,\lambda,\Lambda)\in (0,1)$ and $\gamma=\gamma(n,\lambda,\Lambda,p) \in (0,1)$ such that if $\bar{u}\in C(B_{1})$ is a normalized viscosity solution of \eqref{eqs}, then for any $\kappa \in \N$ there exists $\bar{\xi}_{\kappa}\in \mathbb{R}^{n}$ such that
\begin{flalign}\label{osci}
\osc_{B_{\sigma_{\kappa}}}(\bar{u}-\bar{\xi}_{\kappa}\cdot x)\le \sigma^{\kappa(1+\gamma)}.
\end{flalign}
\end{lemma}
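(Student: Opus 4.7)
The plan is to argue by induction on $\kappa$, applying the $\bar F$-harmonic approximation result (Lemma \ref{har}) to suitably rescaled and shifted versions of $\bar u$ at each scale.

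First I fix parameters: take $\sigma=\sigma(n,\lambda,\Lambda)\in(0,1)$ from \eqref{delta}, and then choose $\gamma=\gamma(n,\lambda,\Lambda,p)\in(0,1)$ small enough so that
\begin{equation*}
\gamma(p+1)<1 \qquad \text{and} \qquad \sigma^{\gamma}\ge \tfrac{1}{2}.
\end{equation*}
I also take the smallness parameter $\varepsilon$ from Section \ref{small} with $\varepsilon\le \iota$, where $\iota$ is the threshold provided by Lemma \ref{har}. The base case $\kappa=0$ is immediate from $\eqref{sreg}_1$ with the choice $\bar\xi_0:=0$.

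For the inductive step, assume \eqref{osci} holds at step $\kappa$ with some $\bar\xi_\kappa\in\mathbb{R}^n$. Set
\begin{equation*}
v(x):=\sigma^{-\kappa(1+\gamma)}\bigl[\bar u(\sigma^{\kappa}x)-\sigma^{\kappa}\bar\xi_\kappa\cdot x\bigr],\qquad x\in B_1,
\end{equation*}
so that $\osc_{B_1}v\le 1$. Put $L:=\sigma^{\kappa(1-\gamma)}$ and $\bar F_L(M):=L\bar F(M/L)$; the latter is $(\lambda,\Lambda)$-elliptic with the same constants as $\bar F$, as noted in Section \ref{uniop}. A direct computation shows that $v$ is a viscosity solution in $B_1$ of
\begin{equation*}
\bigl[|\tilde\xi_\kappa+Dv|^{p}+\bar a_\kappa(x)|\tilde\xi_\kappa+Dv|^{q}\bigr]\,\bar F_L(D^2 v)=\bar f_\kappa(x),
\end{equation*}
with
\begin{equation*}
\tilde\xi_\kappa:=\sigma^{-\kappa\gamma}\bar\xi_\kappa,\quad \bar a_\kappa(x):=\sigma^{\kappa\gamma(q-p)}\bar a(\sigma^\kappa x),\quad \bar f_\kappa(x):=\sigma^{\kappa(1-\gamma(p+1))}\bar f(\sigma^\kappa x).
\end{equation*}
Since $q\ge p$ and $\gamma(p+1)<1$, both scaling exponents are non-negative, so $\bar a_\kappa\ge 0$ and $\|\bar f_\kappa\|_{L^\infty(B_1)}\le \|\bar f\|_{L^\infty(B_1)}\le \varepsilon\le \iota$. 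Thus Lemma \ref{har} applies to $v$, furnishing $\tilde\xi_\sigma\in\mathbb{R}^n$ with $\osc_{B_\sigma}(v-\tilde\xi_\sigma\cdot x)\le \sigma/2$. Setting $\bar\xi_{\kappa+1}:=\bar\xi_\kappa+\sigma^{\kappa\gamma}\tilde\xi_\sigma$ and undoing the change of variables yields
\begin{equation*}
\osc_{B_{\sigma^{\kappa+1}}}\!\bigl(\bar u-\bar\xi_{\kappa+1}\cdot y\bigr)\le \tfrac{1}{2}\sigma^{\kappa(1+\gamma)+1}\le \sigma^{(\kappa+1)(1+\gamma)},
\end{equation*}
where the final inequality is precisely the condition $\sigma^{\gamma}\ge 1/2$. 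This closes the induction.

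The main obstacle is the bookkeeping of the rescaling: one must verify that the new equation satisfied by $v$ still falls within the class covered by Lemma \ref{har} \emph{uniformly in} $\kappa$. The crucial structural points are that the chosen rescaling $\bar F_L(M)=L\bar F(M/L)$ preserves the ellipticity constants, that $q\ge p$ makes the prefactor of $\bar a_\kappa$ stay bounded, and that the constraint $\gamma(p+1)<1$ is exactly what forces the source $\bar f_\kappa$ to remain below the smallness threshold $\iota$ at every scale. These two conditions, together with $\sigma^\gamma\ge 1/2$, pin down the final allowable range of $\gamma$ in terms of $(n,\lambda,\Lambda,p)$.
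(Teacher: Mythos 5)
Your proposal is correct and follows essentially the same route as the paper: induction with the rescalings $v(x)=\sigma^{-\kappa(1+\gamma)}[\bar u(\sigma^{\kappa}x)-\sigma^{\kappa}\bar\xi_{\kappa}\cdot x]$, $\bar F_{L}(M)=L\bar F(M/L)$, the constraint $\gamma(p+1)<1$ to keep $\|\bar f_{\kappa}\|_{L^{\infty}}\le\iota$, and $\sigma^{\gamma}\ge 1/2$ to close the step via Lemma \ref{har}. The only cosmetic difference is that the paper additionally requires $\gamma<\alpha$ in \eqref{gamma}, which is anyway implied by $\sigma^{\gamma}\ge 1/2$ and the choice of $\sigma$ in \eqref{delta}, so nothing is lost.
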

\begin{proof}
Let $\sigma>0$ be as in \eqref{delta} and
\begin{flalign}\label{gamma}
\gamma\in \left(0,\min\left\{\alpha,\frac{1}{p+1},\frac{\log(2)}{-\log(\sigma)}\right\}\right),
\end{flalign}
where $\alpha\in (0,1)$ is the same as in \eqref{030}. A direct consequence of the restriction in \eqref{gamma} is that
\begin{flalign}\label{gamma2}
\sigma^{\gamma}>\frac{1}{2}.
\end{flalign}
Moreover, we fix the parameter $m\in (0,1)$ defined in \eqref{mm} equal to $\iota^{\frac{1}{p+2}}$, where, by Remark \ref{remd}, $\iota=\iota(n,\lambda,\Lambda,p,q)>0$ is the same as in Lemma \ref{har} corresponding to $\sigma$ in \eqref{delta}. In this way we also determine the dependency $m=m(n,\lambda,\Lambda,p,q)$, thus closing the ambiguity due to the presence of $m$ in the scaled problem \eqref{20}. Notice that none of the quantities appearing in the estimates provided so far depend on the sup-norm of $\bar{a}$, nor on its modulus of continuity. For $\kappa \in \N\cup \{0\}$, set $\sigma_{\kappa}:=\sigma^{\kappa}$. We then proceed by induction.\\\\
\emph{Basic step: $\kappa=0$}. In this case, \eqref{osci} is verified with $\bar{\xi}_{0}=0$. In fact,
\begin{flalign*}
\osc_{B_{\sigma_{0}}}(\bar{u}-\bar{\xi}_{0}\cdot x)=\osc_{B_{1}}(\bar{u})\stackrel{\eqref{19}_{1}}{\le}1.
\end{flalign*}
\emph{Induction assumption}. We assume that there exists $\bar{\xi}_{\kappa}\in \mathbb{R}^{n}$ such that
\begin{flalign}\label{osck}
\osc_{B_{\sigma_{\kappa}}}(\bar{u}-\bar{\xi}_{\kappa}\cdot x)\le \sigma_{\kappa}^{1+\gamma}.
\end{flalign}
\emph{Induction step}. Define
\begin{flalign*}
\bar{u}_{\kappa}(x):=\sigma_{\kappa}^{-(1+\gamma)}\left[\bar{u}(\sigma_{\kappa}x)-\sigma_{k}\bar{\xi}_{\kappa}\cdot x\right].
\end{flalign*}
It is easy to see that $\bar{u}_{\kappa}\in C(B_{1})$ satisfies
\begin{flalign*}
\left[\snr{D\bar{u}_{\kappa}+\tilde{\xi}_{\kappa}}^{p}+\bar{a}_{\kappa}(x)\snr{D\bar{u}_{\kappa}+\tilde{\xi}_{\kappa}}^{q}\right]\bar{F}_{\kappa}(D^{2}\bar{u}_{\kappa})=\bar{f}_{\kappa}(x)\quad \mbox{in} \ \ B_{1},
\end{flalign*}
where 
\begin{flalign*}
&\tilde{\xi}_{\kappa}:=\sigma_{\kappa}^{-\gamma}\bar{\xi}_{\kappa},\\
&\bar{a}_{\kappa}(x):=\sigma_{\kappa}^{\gamma(q-p)}\bar{a}(\sigma_{\kappa}x)\quad \mbox{for all} \ \ x\in B_{1}\\
&\bar{F}_{\kappa}(M):=\sigma_{\kappa}^{1-\gamma}\bar{F}(\sigma_{\kappa}^{\gamma-1}M)\quad \mbox{for all} \ \ M\in \mathcal{S}(n)\\
&\bar{f}_{\kappa}(x):=\sigma_{\kappa}^{1-\gamma(1+p)}\bar{f}(\sigma_{\kappa}x).
\end{flalign*}
Notice that, by \eqref{ell}, $\bar{F}_{\kappa}$ is uniformly $(\lambda,\Lambda)$-ellitic with the same ellipticity constants as $\bar{F}$. By the choice we made on $m=\iota^{\frac{1}{p+2}}$, we obtain that
\begin{flalign*}
\nr{\bar{f}_{\kappa}}_{L^{\infty}(B_{1})}\le \sigma_{\kappa}^{1-\gamma(1+p)}\nr{\bar{f}}_{L^{\infty}(B_{1})}\stackrel{\eqref{gamma},\eqref{19}_{3}}{\le}\iota.
\end{flalign*}
Finally, by \eqref{osck} we readily see that
\begin{flalign*}
\osc_{B_{1}}(\bar{u}_{\kappa})=\sigma_{\kappa}^{-(1+\gamma)}\osc_{B_{\sigma_{\kappa}}}\left(\bar{u}-\bar{\xi}_{\kappa}\cdot x\right)\le 1,
\end{flalign*}
therefore all the assumptions of Lemma \ref{har} are satisfied, thus there exists $\tilde{\xi}_{\kappa+1}$ such that
\begin{flalign}\label{200}
\osc_{B_{\sigma}}\left(\bar{u}_{\kappa}-\tilde{\xi}_{\kappa+1}\cdot x\right)\le \frac{\sigma}{2}.
\end{flalign}
Set $\bar{\xi}_{\kappa+1}:=\bar{\xi}_{\kappa}+\sigma_{\kappa}^{\gamma}\tilde{\xi}_{\kappa+1}$. We then have
\begin{flalign*}
\osc_{B_{\sigma_{\kappa+1}}}\left(\bar{u}-\bar{\xi}_{\kappa+1}\cdot x\right)\stackrel{\eqref{200}}{\le} \frac{1}{2}\sigma \cdot \sigma_{\kappa}^{1+\gamma}\stackrel{\eqref{gamma2}}{\le} \sigma_{\kappa+1}^{1+\gamma},
\end{flalign*}
and we are done.
\end{proof}
Once Lemma \ref{iter} is available, we can complete the proof of Theorem \ref{reg} in a straightforward way. Whenever $\rr\in (0,1]$, we can find $\kappa \in \mathbb{N}\cup\{0\}$ such that $\sigma^{\kappa+1}< \rr\le \sigma^{\kappa}$. So we have
\begin{flalign*}
\osc_{B_{\rr}}\left(\bar{u}-\bar{\xi}_{\kappa}\cdot x\right)\le \osc_{B_{\sigma^{\kappa}}}\left(\bar{u}-\bar{\xi}_{\kappa}\cdot x\right)\stackrel{\eqref{osci}}{\le}\sigma^{\kappa(1+\gamma)}\le \sigma^{-(1+\gamma)}\rr^{1+\gamma}:=c(n,\lambda,\Lambda,p)\rr^{1+\gamma},
\end{flalign*}
therefore $\bar{u}$ is $C^{1,\gamma}$ around zero. This is enough, in fact by standard translation arguments we can prove the same for any point of $B_{1/2}$ thus getting that $\bar{u}\in C^{1+\gamma}(B_{1/2})$ and then conclude with Remark \ref{rem1} and a covering argument.

\end{document}